\def\input@path{{figures/}{figures_new/}}
\newtheorem{defi}{Definition}[section]
\newtheorem{theo}[defi]{Theorem}
\newtheorem{obs}[defi]{Observation}
\newtheorem{cor}[defi]{Corollary}
\newtheorem{prob}[defi]{Problem}
\newcounter{claimcount}
\newenvironment{claim}{\refstepcounter{claimcount}\textbf{Claim \arabic{claimcount}.}}{}
\theoremstyle{remark}
\newcommand{\ENDproof}{\hfill $\blacksquare$\medskip\par}
\title{Fractional factors and component factors in graphs with isolated toughness smaller than 1}
\author{Isaak H.~Wolf \thanks{Funded by Deutsche Forschungsgemeinschaft (DFG) - 445863039} \\
\footnotesize
		Department of Mathematics, Paderborn University, Warburger Str.\ 100, 33098 Paderborn,
		Germany.
	\\
\\ \footnotesize isaak.wolf@uni-paderborn.de}
\date{}
\begin{document}

\maketitle

\begin{abstract}
Let $G$ be a simple graph and let $n,m$ be two integers with $0<m<n$. We prove that $iso(G-S)\leq \frac{n}{m}|S|$ for every $S \subset V(G)$ if and only if $G$ has a $\{C_{2i+1},T \colon 1 \leq i < \frac{m}{n-m}, T\in\mathcal{T}_{\frac{n}{m}}\}$-factor, where $iso(G-S)$ denotes the number of isolated vertices of $G-S$ and $\mathcal{T}_{\frac{n}{m}}$ is a special family of trees. Furthermore, we characterize the trees in $\mathcal{T}_{\frac{n}{m}}$ in terms of their bipartition.
\end{abstract}

{\bf Keywords:} component factors, factors, fractional factors, isolated toughness.

\section{Introduction, notation and main result}
We consider finite graphs that may have parallel edges but no loops. A graph without parallel edges is called \emph{simple}. For a graph $G$, the vertex-set is denoted by $V(G)$ and the edge-set by $E(G)$. Let $X,Y \subseteq V(G)$ be two disjoint sets. The set of edges with one endvertex in $X$ and one in $Y$ is denoted by $E_G(X,Y)$. Let $v \in V(G)$. We write $\partial_G(v)$ for $E_G(\{v\},V(G)\setminus\{v\})$, and $d_G(v)$ for $|\partial_G(v)|$. The indices will be omitted, if there is no harm of confusion. The graph induced by $V(G)\setminus X$ is denoted by $G-X$. For $E \subseteq E(G)$, the graph obtained from $G$ by deleting every edge in $E$ is denoted by $G-E$; for convenience we write $G-e$ for $G-E$ if $E$ consists of a single edge $e$. Furthermore, we say that $E$ induces a subgraph $H$ of $G$ if $E(H) = E$ and $V(H)$ contains all vertices of $G$ that are incident with an edge of $E$.

The path with $i$ vertices is denoted by $P_i$. The complete bipartite graph with partition sizes $i,j$ (where $i \leq j$) is denoted by $K_{i,j}$; if $i=1$, then $K_{i,j}$ is a \emph{star}. A \emph{circuit} is a 2-regular connected graph; $C_i$ denotes the circuit with $i$ vertices. A connected graph without circuits is a \emph{tree}. For a tree $T$, every vertex of degree 1 is a \emph{leaf} of $T$; the set of leaves of $T$ is denoted by $Leaf(T)$. Moreover, every edge incident with a leaf is a \emph{pendant edge} of $T$.

If a subgraph of $G$ is spanning, then it is a \emph{factor} of $G$. Let $\mathcal{G}$ be a set of graphs. A factor $F$ of $G$ is a $\mathcal{G}$-factor, if every component of $F$ is isomorphic to an element of $\mathcal{G}$. Let $g_1,f_1:V(G) \to \mathbb{Z}$ and $g_2,f_2:V(G) \to \mathbb{R}$ be functions with $g_i(w)\leq f_i(w)$ for every $w \in V(G)$ and every $i \in \{1,2\}$. A factor $F$ of $G$ is a \emph{$(g_1,f_1)$-factor}, if $g_1(w) \leq d_F(w) \leq f_1(w)$ for every $w \in V(G)$. For a function $h: E(G) \to [0,1]$, we define $d^{h}(v):= \sum_{e \in \partial_{G}(v)}h(e)$. If $g_2(w)\leq d^h(w) \leq f_2(w)$ for every $w \in V(G)$, then $h$ is a \emph{fractional $(g_2,f_2)$-factor} of $G$. Additionally, if $g_2(w)=a$ and $f_2(w)=b$ for every $w \in V(G)$, then a fractional $(g_2,f_2)$-factor is called a \emph{fractional $[a,b]$-factor}. The set of isolated vertices of $G$ is denoted by $Iso(G)$; we write $iso(G)$ for $|Iso(G)|$.

The isolated toughness of a graph $G$, denoted by $I(G)$, was first introduced in \cite{yang2001} and is defined as follows:
$$I(G)=\mathrm{min}\left\{\frac{|S|}{iso(G-S)} \colon S \subseteq V(G), iso(G-S)\geq 2\right\}$$
if $G$ is not a complete graph and $I(G)=\infty$ otherwise. For $t \in \mathbb{R}$, a graph $G$ is \emph{isolated $t$-tough} if $I(G) \geq t$. The isolated toughness is strongly related to the existence of fractional factors and specific component factors. Tutte \cite{tutte19531} characterized isolated $1$-tough graphs by the existence of component factors as follows.

\begin{theo}[Tutte \cite{tutte19531}]
Let $G$ be a simple graph. Then, $G$ has a $\{K_{1,1},C_i \colon i \geq 3\}$-factor if and only if
$$iso(G-S)\leq |S| \quad \text{ for all } S \subset V(G).$$
\end{theo}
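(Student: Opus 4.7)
The plan is to prove the biconditional by a direct edge-count for necessity and by a reduction to Hall's marriage theorem via the bipartite double cover of $G$ for sufficiency.

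For necessity, I would let $F$ be a $\{K_{1,1},C_i\colon i\geq 3\}$-factor of $G$, fix $S\subset V(G)$, and verify $|V(K)\cap Iso(G-S)|\leq|V(K)\cap S|$ for each component $K$ of $F$ separately. If $K=K_{1,1}$ on $\{u,v\}$ and $u\in Iso(G-S)$, then the unique $F$-neighbour $v$ of $u$ must lie in $S$. If $K=C_\ell$ and $v\in V(K)\cap Iso(G-S)$, then both $C_\ell$-neighbours of $v$ lie in $S$, so the $2|V(K)\cap Iso(G-S)|$ cycle-edges emanating from $V(K)\cap Iso(G-S)$ are pairwise distinct (their $Iso$-endpoints never coincide, as $Iso(G-S)\cap S=\emptyset$) and all land in $V(K)\cap S$; since each vertex of $V(K)\cap S$ is on at most two $C_\ell$-edges, the inequality follows. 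Summing over components of $F$ yields $iso(G-S)\leq|S|$.

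For sufficiency I would introduce the bipartite double cover $G^*$, with vertex classes $V'=\{v'\colon v\in V(G)\}$ and $V''=\{v''\colon v\in V(G)\}$ and an edge $u'v''$ for every $uv\in E(G)$. A perfect matching $M$ of $G^*$ induces a fixed-point-free permutation $\sigma$ of $V(G)$ via $\sigma(v)=u\Leftrightarrow v'u''\in M$; the transpositions of $\sigma$ give rise to $K_{1,1}$-components and the longer $\sigma$-cycles to circuit-components of a $\{K_{1,1},C_i\colon i\geq 3\}$-factor of $G$, and conversely any such factor yields a perfect matching of $G^*$ by cyclically orienting each cycle-component. Hence, by Hall's theorem applied to the bipartite graph $G^*$, it suffices to verify that $|N_G(A)|\geq|A|$ for every $A\subseteq V(G)$.

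I would establish this neighbourhood inequality in two stages. For independent $A$, setting $S:=N_G(A)$ makes $A\subseteq V(G)\setminus S$ with $N_G(v)\subseteq S$ for every $v\in A$, so $A\subseteq Iso(G-S)$ and the hypothesis forces $|A|\leq iso(G-S)\leq|S|=|N_G(A)|$. For arbitrary $A$ I would pass to $A^*:=A\setminus N_G(A)$; this set is independent in $G$ (an internal edge would throw an endpoint into $N_G(A)$), no $v\in A^*$ has a neighbour in $A$ (so $N_G(A^*)\subseteq N_G(A)\setminus A$), and $A^*$ is non-empty whenever $|N_G(A)|<|A|$ (otherwise $A\subseteq N_G(A)$ and Hall holds trivially). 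Writing $r:=|A\cap N_G(A)|$ one gets $|A^*|=|A|-r$ and $|N_G(A^*)|\leq|N_G(A)|-r$, so $|N_G(A)|<|A|$ would force $|N_G(A^*)|<|A^*|$, contradicting the independent-set case. The step I expect to require the most care is the $G^*$-correspondence: recognising that a transposition of $\sigma$ uses the underlying edge $uv\in E(G)$ via both edges $u'v''$ and $v'u''$ of $G^*$, producing a $K_{1,1}$-component in $G$ rather than a parallel-edge artefact. Once this bijection is in place, Hall's theorem and the two-stage counting above complete the proof.
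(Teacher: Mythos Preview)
The paper does not prove this theorem; it is quoted as Tutte's classical 1953 result and serves only as historical motivation for the generalisations that follow. There is therefore no ``paper's own proof'' to compare against.

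That said, your proposal is a correct and self-contained argument. The necessity direction is the standard component-by-component double count. For sufficiency, the reduction via the bipartite double cover $G^*$ is sound: a perfect matching of $G^*$ encodes a fixed-point-free permutation of $V(G)$ whose orbits are either transpositions (yielding $K_{1,1}$-components) or cycles of length at least $3$ (yielding circuit components), and Hall's condition on $G^*$ unpacks to $|N_G(A)|\geq|A|$ for all $A\subseteq V(G)$. Your two-stage verification of this inequality---first for independent $A$ by setting $S:=N_G(A)$ and invoking the hypothesis, then for arbitrary $A$ by passing to the independent set $A^*:=A\setminus N_G(A)$ with $N_G(A^*)\subseteq N_G(A)\setminus A$---is clean and goes through without trouble. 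The worry you flag about transpositions is not a real obstacle: a $2$-cycle $(u,v)$ of $\sigma$ uses the two distinct $G^*$-edges $u'v''$ and $v'u''$ but projects to the single simple edge $uv\in E(G)$, which is exactly the $K_{1,1}$-component you want.

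If one wished to situate your argument relative to the paper's methods, the paper's own machinery (fractional factors plus the Anstee/Heinrich et al.\ $(g,f)$-factor criterion, Theorem~\ref{gffactors}) treats only the range $\frac{n}{m}>1$ and does not re-derive the case $\frac{n}{m}=1$. Your Hall-based route is more elementary and tailored specifically to this endpoint.
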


This result was extended by Amahashi, Kano \cite{amahashi1982factors} and Las Vergnas \cite{las1978extension} to isolated $\frac{1}{n}$-tough graphs.

\begin{theo}[Amahashi, Kano \cite{amahashi1982factors}, Las Vergnas \cite{las1978extension}]
\label{theo:star_factor}
Let $G$ be a simple graph and let $n \geq 2$ be an integer. Then, $G$ has a $\{K_{1,i} \colon 1\leq i\leq n\}$-factor if and only if
$$iso(G-S)\leq n|S| \quad \text{ for all } S \subset V(G).$$
\end{theo}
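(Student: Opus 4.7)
My plan for this direction is a direct charging argument. Suppose $F$ is a $\{K_{1,i}:1\le i\le n\}$-factor and fix $S\subset V(G)$. Every $v\in Iso(G-S)$ has $N_G(v)\subseteq S$, so all neighbors of $v$ in $F$ lie in $S$. I would charge $v$ to a vertex of $S$ inside $v$'s star-component of $F$: to the center if $v$ is a leaf, and to one of its leaves (all of which must then lie in $S$) if $v$ is a center. Each $s\in S$ belongs to a single component of $F$ and so receives at most $n$ charges --- up to $n$ from isolated leaves of a star centered at $s$, or at most one from an isolated center that chose $s$ as its representative. Summing across $S$ yields $iso(G-S)\le n|S|$.

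\textbf{Sufficiency.} This direction is the crux. My plan is to recast the problem as a constrained bipartite matching. I would build an auxiliary bipartite graph $H$ with left side $V(G)$ (the potential leaves) and right side consisting of $n$ ``slot'' copies $v^1,\dots,v^n$ of each $v\in V(G)$ (the potential center positions), with $u$ adjacent to $v^i$ whenever $uv\in E(G)$. To enforce the dichotomy that each vertex is a center or a leaf but not both, I would augment $H$ with private gadget vertices attached to each $v$, so that a $b$-matching of the extended graph saturating the left side corresponds exactly to a valid partition of $V(G)$ into centers and leaves together with an assignment of each leaf to its adjacent center. The existence of such a matching is then governed by a Hall- or Tutte-type defect condition, which I would verify from the hypothesis by contrapositive: given a deficient vertex set in the gadgeted $H$, I would extract a set $S\subset V(G)$ forcing too many isolated vertices in $G-S$ relative to $n|S|$, contradicting the assumption.

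\textbf{Main obstacle.} The essential difficulty is the non-local center/leaf disjunction: whether $v$ may act as a leaf depends on whether any of its slot copies $v^i$ are used as a center position, a constraint which ordinary matching formulations do not encode. Designing the correct gadget, and in particular translating a gadget-level deficit back into a genuine violating set $S$ in $G$, is where the real work concentrates. If the bipartite route proves awkward, I would fall back on an extremal/alternating-path argument directly in $G$: start with a maximum packing of vertex-disjoint stars $K_{1,i}$ with $1\le i\le n$, assume for contradiction that some vertex is uncovered, and try via alternating augmentations either to enlarge the packing or to pinpoint a violating $S$.
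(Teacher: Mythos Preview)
Your necessity argument is fine. For sufficiency, the plan is more convoluted than it needs to be, and the obstacle you single out is a red herring. You do not have to enforce the center/leaf disjunction at all: it suffices to find \emph{any} spanning subgraph $F$ with $1\le d_F(v)\le n$ for every $v$, because an edge-minimal such $F$ is automatically a forest of stars. Indeed, in an edge-minimal $[1,n]$-factor every edge is pendant (removing it would drop some endpoint to degree $0$), hence every component is a tree of diameter at most $2$, i.e.\ a $K_{1,i}$ with $1\le i\le n$. And the existence of a $[1,n]$-factor is immediate from the $(g,f)$-factor theorem (Theorem~\ref{gffactors}) with $g\equiv 1$, $f\equiv n$: the defect condition there reads $g(T)-d_{G-S}(T)\le f(S)$ with $T=Iso(G-S)$, i.e.\ $iso(G-S)\le n|S|$, which is exactly the hypothesis. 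So no bipartite gadgets or alternating augmentations are required; your proposed machinery would have to re-prove a special case of Theorem~\ref{gffactors} by hand.

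The paper's own route is different again: it recovers Theorem~\ref{theo:star_factor} as the $m=1$ instance of the main result Theorem~\ref{main result_isolated_vetex_conditions}. For $m=1$ one has $\frac{m}{n-m}\le 1$, so no odd circuits appear in the factor, and the structural Corollary in Section~\ref{sec:Tnm} (parts $(i)$, $(iii)$, $(iv)$) forces every $T\in\mathcal{T}_{n/1}$ to satisfy $Leaf(T-Leaf(T))=\emptyset$, whence $\mathcal{T}_{n/1}=\{K_{1,i}:1\le i\le n\}$. This is a by-product of the general machinery rather than a standalone argument; the two-line $[1,n]$-factor proof above is the natural direct route.
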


Kano, Lu and Yu \cite{kano2010component} asked for a general relation between isolated toughness and the existence of component factors.

\begin{prob}[Problem 1 in \cite{kano2010component}, Problem 7.10 in \cite{factors_and_factorizations_book}]\label{prob:I(G)_component_factors}
Let $G$ be a simple graph and let $n,m$ be two positive integers. If
$$iso(G-S)\leq \frac{n}{m} |S| \quad \text{ for all } \emptyset \neq S \subset V(G),$$
what factor does $G$ have?
\end{prob}

The same authors \cite{Yu2019FractionalFC} gave an answer to Problem~\ref{prob:I(G)_component_factors} when $n$ is odd, $n \geq 3$ and $m=2$. Let $\mathcal{T}(3)$ be the set of trees that can be obtained as follows (see \cite{Yu2019FractionalFC} for a more detailed definition):
\begin{enumerate}
\item start with a tree $T$ in which every vertex has degree $1$ or $3$,
\item insert a new vertex of degree $2$ into every edge of $T$,
\item add a new pendant edge to every leaf of $T$.
\end{enumerate}

For every integer $k\geq 2$, let $\mathcal{T}(2k+1)$ be the set of trees that can be obtained as follows (see \cite{Yu2019FractionalFC} for a more detailed definition):
\begin{enumerate}
\item start with a tree $T$ such that for every $v \in V(T)$
\begin{itemize}
\item $d_{T-Leaf(T)}(v) \in \{1,3, \ldots 2k+1\}$, and
\item $2|\{w \colon w \in Leaf(T) \cap N_T(v)\}|+d_{T-Leaf(T)}(v)\leq 2k+1$,
\end{itemize}
\item insert a new vertex of degree $2$ into every edge of $T-Leaf(T)$,
\item for every $v \in T-Leaf(T)$ with $d_{T-Leaf(T)}(v)=2l+1<2k+1$, add $k-l-|\{w \colon w \in Leaf(T) \cap N_T(v)\}|$ new pendant edges to $v$.
\end{enumerate}

\begin{theo}[Kano, Lu, Yu \cite{Yu2019FractionalFC}]
A simple graph $G$ has a $\{P_2,C_3,P_5,T \colon T \in \mathcal{T}(3)\}$-factor if and only if
$$iso(G-S)\leq \frac{3}{2}|S| \quad \text{ for all } S \subset V(G).$$
\end{theo}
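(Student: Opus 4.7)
My plan is to prove the two directions separately: the necessity is componentwise and nearly routine, while the sufficiency is the core difficulty.

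For necessity, given a $\{P_2,C_3,P_5,T\in\mathcal{T}(3)\}$-factor $F$ of $G$ and any $S\subseteq V(G)$, every isolated vertex of $G-S$ is already isolated inside its $F$-component $C$ in $C-(S\cap V(C))$, so it suffices to prove $iso(H-S')\leq \tfrac{3}{2}|S'|$ for every allowed component $H$ and every $S'\subseteq V(H)$. For $P_2, C_3, P_5$ this is a finite check; the tight case is $P_5=v_1v_2v_3v_4v_5$ with $S'=\{v_2,v_4\}$, producing $iso=3$ and ratio exactly $\tfrac{3}{2}$. For $T\in\mathcal{T}(3)$, the construction is designed so that each isolated vertex of $T-S'$ is either a pendant added in step 3 (whose unique neighbor, an original leaf, must lie in $S'$) or an original degree-3 vertex of $T$ (whose three subdivision neighbors must all lie in $S'$); a direct count based on the subdivision/pendant pattern then yields the bound.

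For sufficiency I would invoke Theorem~\ref{theo:star_factor} with $n=2$: the hypothesis implies $iso(G-S)\leq 2|S|$, so $G$ has a $\{K_{1,1},K_{1,2}\}$-factor. The $K_{1,1}=P_2$ components are already allowed, but the $K_{1,2}=P_3$ components are forbidden and must be replaced. Take a $\{K_{1,1},K_{1,2}\}$-factor $F$ minimizing the number of $P_3$-components, and suppose toward contradiction that $F$ still contains a $P_3$-component $H$ with center $v$ and endpoints $u,w$. Using $G$-edges incident with $u$, $v$, or $w$ that are not in $F$, I would perform one of three local exchanges: an edge from $\{u,v,w\}$ to a $P_2$-component $xy$ of $F$ enables a merge into a $P_5$; an edge from $u$ or $w$ to the center of another $P_3$-component allows one to stitch the two $P_3$'s into a $\mathcal{T}(3)$-tree via the prescribed subdivision/pendant pattern; if $u$ and $w$ share a neighbor outside $V(H)$ or are themselves adjacent, one can reorganize into a $C_3$ and reassign the remaining vertex. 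Each such exchange strictly decreases the number of $P_3$-components, contradicting minimality.

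The main obstacle, in my view, is the blocking case, in which none of these exchanges is available at any $P_3$-component of $F$. Here I would derive a contradiction by exhibiting a set $S$ violating the isolated toughness: a natural choice is the set of all $P_3$-centers of $F$, because then each $P_3$ contributes two isolated endpoints against a single vertex of $S$, giving a local ratio of $2>\tfrac{3}{2}$. Making this precise requires using the blocking conditions to rule out that any endpoint $u$ or $w$ is rescued from isolation in $G-S$ by a $G$-edge outside $F$, which is exactly the case distinction above. A secondary subtlety is verifying that iterated merges of $P_3$-components really produce trees in $\mathcal{T}(3)$ rather than arbitrary trees; this requires carefully respecting the subdivision-of-a-cubic-skeleton-plus-added-pendants structure defining the family, and in ambiguous mergers using the exact value $\tfrac{3}{2}$ of the isolated toughness to force the correct local configuration.
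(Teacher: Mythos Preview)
Your sufficiency argument has a structural gap. You minimize the number of $P_3$-components over $\{K_{1,1},K_{1,2}\}$-factors, but the exchanges you describe (merging into $P_5$, into $C_3$, into a $\mathcal{T}(3)$-tree) produce components outside $\{K_{1,1},K_{1,2}\}$; the resulting object is no longer in the class you are minimizing over, so nothing is contradicted. Concretely, $G=C_3$ satisfies the hypothesis, yet every $\{K_{1,1},K_{1,2}\}$-factor of $C_3$ is a single $P_3$, so the minimum $P_3$-count is $1$, not $0$. Even if you instead minimize over factors with components in $\{P_2,P_3,C_3,P_5\}\cup\mathcal{T}(3)$, your exchange ``stitch two $P_3$'s into a $\mathcal{T}(3)$-tree'' cannot work as stated: two $P_3$'s have six vertices total, while the smallest member of $\mathcal{T}(3)$ that is not already $P_5$ has ten vertices (start from $K_{1,3}$). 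The blocking case is also not closed: with $S$ the set of $P_3$-centers, an endpoint $u$ may have a $G$-neighbour that is an endpoint of a $P_2$-component or an endpoint of another $P_3$, neither of which your listed exchanges handles, so $u$ need not be isolated in $G-S$. Finally, your necessity check for $T\in\mathcal{T}(3)$ is incomplete: subdivision vertices and original leaves (both of degree~$2$) can also become isolated, so the case split you give does not cover all isolated vertices of $T-S'$.

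The paper does not prove this statement directly (it is cited from Kano--Lu--Yu); its own contribution is the general Theorem~\ref{main result_isolated_vetex_conditions}, whose proof specializes to $n=3$, $m=2$ and takes a completely different route. There one chooses an \emph{edge-minimal} spanning subgraph $F$ of $G$ that still satisfies the $\tfrac{3}{2}$-isolated-vertex-condition. By Theorem~\ref{fractfactors2}, $F$ has a fractional $[1,\tfrac{3}{2}]$-factor $h$ with values in $\{0,\tfrac12,1\}$, and by minimality $h$ never takes the value~$0$. One then modifies $h$ along closed even trails and along paths joining circuits or leaves, each time decreasing a suitably chosen $h(e)$; this forces $F$ to have no closed even trail, hence each component of $F$ is a tree or a single odd circuit, and a final count shows the circuits are $C_3$ while the trees lie in $\mathcal{T}_{3/2}$. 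No combinatorial exchange on star-factors is needed.
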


\begin{theo}[Kano, Lu, Yu \cite{Yu2019FractionalFC}]
Let $k \geq 2$ be an integer. A simple graph $G$ has a $\{K_{1,i},T \colon 1 \leq i \leq k, T \in \mathcal{T}(2k+1)\}$-factor if and only if
$$iso(G-S)\leq \frac{2k+1}{2}|S| \quad \text{ for all } S \subset V(G).$$
\end{theo}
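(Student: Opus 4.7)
The plan is to handle the two directions separately, with most of the effort going into sufficiency.

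For the necessity direction, let $F$ be a $\{K_{1,i},T \colon 1\leq i\leq k,\,T\in\mathcal{T}(2k+1)\}$-factor of $G$ and fix $S\subset V(G)$. For each component $C$ of $F$ set $s_C:=|V(C)\cap S|$ and $i_C:=|V(C)\cap Iso(G-S)|$, so that $|S|=\sum_C s_C$ and $iso(G-S)=\sum_C i_C$. It suffices to verify the local inequality $2 i_C \leq (2k+1) s_C$ componentwise. For a star $C=K_{1,j}$ with $1\leq j\leq k$, a short case split on whether the centre lies in $S$ gives $2 i_C\leq 2j\leq 2k<(2k+1) s_C$ when the centre is in $S$; otherwise $i_C$ can be nonzero only when all leaves lie in $S$, in which case $i_C\leq 1$ and $s_C=j\geq 1$. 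For a component $T\in\mathcal{T}(2k+1)$ I would unpack the three-step construction: let $T^{\star}$ denote the starting tree, $A:=V(T^{\star})\setminus Leaf(T^{\star})$ its non-leaves, $B$ the set of degree-$2$ subdivision vertices inserted in step~2, and $L$ the set of pendants (original leaves of $T^{\star}$ together with the pendants added in step~3). Any vertex of $B\cup L$ that is isolated in $G-S$ forces its $T$-neighbour(s) in $A$ into $S$; charging isolated $B$-vertices with weight $1/2$ to each of their two $A$-neighbours in $S$ and isolated $L$-vertices with weight $1$ to their unique $A$-neighbour in $S$, and treating the (few) isolated $A$-vertices separately by noting that each of them forces its entire $T$-neighbourhood into $S$, the defining inequality $2|Leaf(T^{\star})\cap N_{T^{\star}}(v)|+d_{T^{\star}-Leaf(T^{\star})}(v)\leq 2k+1$ for $v\in A$ combined with the prescribed pendant counts yields exactly $2 i_C\leq (2k+1) s_C$.

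For the sufficiency direction, observe first that $iso(G-S)\leq \frac{2k+1}{2}|S|\leq (2k+1)|S|$ for every $S\subset V(G)$, so Theorem~\ref{theo:star_factor} applied with $n=2k+1$ already yields a $\{K_{1,i}\colon 1\leq i\leq 2k+1\}$-factor of $G$. The plan is then to refine such a factor until every star has size at most $k$ and the remaining tree components lie in $\mathcal{T}(2k+1)$. Formally, pick a factor $F$ of $G$ each of whose components is either a star $K_{1,i}$ with $1\leq i\leq k$ or a tree, extremal in the lexicographic order that (i)~minimises the number of tree components, (ii)~minimises the total number of vertices lying in tree components that fail to belong to $\mathcal{T}(2k+1)$, and (iii)~maximises $\sum_T |Leaf(T)|$ over all tree components. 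Whenever some tree component fails one of the defining conditions of $\mathcal{T}(2k+1)$, the obstruction localises around a small vertex set $X\subset V(T)$; applying the sharper hypothesis $iso(G-S)\leq \frac{2k+1}{2}|S|$ to an $S$ built from $X$ then produces an edge $e\in E(G)\setminus E(F)$ whose swap into $F$ strictly improves the lexicographic order --- either by collapsing a tree into a star or by splitting off an oversized star.

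The main obstacle lies in the last step: showing that the extremal $F$ actually satisfies all three defining conditions of $\mathcal{T}(2k+1)$ simultaneously. The subtlest is the leaf-count inequality $2|Leaf(T)\cap N_T(v)|+d_{T-Leaf(T)}(v)\leq 2k+1$, because a violation requires redistributing pendants at $v$ among other components without destroying the factor property. Admissible swaps only always exist under the full strength of $\frac{2k+1}{2}$ rather than the weaker $2k+1$ used to invoke Theorem~\ref{theo:star_factor}, and designing a uniform local rearrangement that covers all three condition failures, terminates, and never undoes previously achieved structure is the combinatorial heart of the argument. The hypothesis $k\geq 2$ also enters here, since the case $k=1$ is covered by the preceding theorem with the strictly simpler family $\mathcal{T}(3)$.
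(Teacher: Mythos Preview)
This theorem is only \emph{cited} in the paper (from \cite{Yu2019FractionalFC}); the paper does not give its own proof of it. What the paper does prove is the generalisation Theorem~\ref{main result_isolated_vetex_conditions}, and its method is entirely different from yours: it passes through fractional $[1,\tfrac{n}{m}]$-factors (Theorem~\ref{fractfactors2}), takes an inclusion-minimal spanning subgraph $F$ still satisfying the $\tfrac{n}{m}$-isolated-vertex-condition, and then uses alternating $\pm\tfrac{1}{m}$ modifications along trails and paths to rule out even closed trails, multiple circuits, and circuits with pendants in $F$. No star factor from Theorem~\ref{theo:star_factor} is ever invoked, and no swapping of edges in and out of a factor occurs. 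So methodologically your route is genuinely different, and a successful execution would be interesting precisely because it avoids the fractional machinery.

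However, your sufficiency direction is not a proof but a programme, and the gap is exactly where you say it is. You start from a $\{K_{1,i}:1\le i\le 2k+1\}$-factor and then declare an extremal spanning forest $F$ under a three-tier lexicographic order, asserting that whenever a tree component of $F$ violates one of the three defining conditions of $\mathcal{T}(2k+1)$, the hypothesis $iso(G-S)\le\tfrac{2k+1}{2}|S|$ applied to a suitable $S$ produces an edge of $E(G)\setminus E(F)$ whose swap strictly improves the order. You neither specify the set $S$, nor identify the edge, nor show that the swap preserves the required component shapes, for \emph{any} of the three possible violations; and you explicitly flag the leaf-count inequality as ``the combinatorial heart of the argument'' without attempting it. There is also a formulation issue: your class of admissible factors is ``stars $K_{1,i}$ with $i\le k$ or trees'', but every star is a tree, so as written this is just ``spanning forests'' and criterion~(i) (minimise the number of tree components) is not well-posed until you say precisely which components count as ``tree'' versus ``small star''. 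In short, the necessity half is essentially fine (and matches the easy direction of the paper's Theorem~\ref{main result_isolated_vetex_conditions}), but the sufficiency half is a sketch whose decisive step is missing.
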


 We extend these results and give an answer to Problem~\ref{prob:I(G)_component_factors} when $n>m$. For every two integers $n,m$ with $0<m<n$ let $\mathcal{T}_{\frac{n}{m}}$ be the set of trees $T$ such that
\begin{itemize}
\item $iso(T-S) \leq \frac{n}{m} \vert S \vert$ for all $S\subset V(T)$, and
\item for every $e\in E(T)$ there is a set $S^* \subset V(T)$ with $iso((T-e)-S^*) > \frac{n}{m} \vert S^* \vert$.
\end{itemize}

The following theorem is the main result of this paper.

\begin{theo} \label{theo:equivalenz_iso_frac_component}
Let $G$ be a simple graph and let $n,m$ be integers with $0<m<n$. Then the following statements are equivalent:
\begin{itemize}
\item[$1)$] $iso(G-S)\leq \frac{n}{m}|S|$ for every $S \subset V(G)$.
\item[$2)$] $G$ has a fractional $[1,\frac{n}{m}]$-factor.
\item[$3)$] $G$ has a fractional $[1,\frac{n}{m}]$-factor with values in $\{0,\frac{1}{m},...,\frac{m-1}{m},1\}$.
\item[$4)$] $G$ has a $\{C_{2i+1},T \colon 1 \leq i < \frac{m}{n-m}, T\in\mathcal{T}_{\frac{n}{m}}\}$-factor.
\end{itemize}
\end{theo}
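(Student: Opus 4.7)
The plan is to prove the cycle of implications $4) \Rightarrow 1) \Rightarrow 2) \Rightarrow 3) \Rightarrow 4)$, extending the approach of Kano, Lu and Yu~\cite{Yu2019FractionalFC}. The direction $4) \Rightarrow 1)$ is a direct count: for any $S \subseteq V(G)$, $iso(G-S) \leq iso(F-S) = \sum_i iso(F_i - (S \cap V(F_i)))$, and in each component the bound holds by definition for trees $T \in \mathcal{T}_{\frac{n}{m}}$ and by the simple observation $2\cdot iso(C_{2i+1}-S') \leq 2|S'|$ (each isolated vertex has both cycle-neighbours in $S'$, and each $S'$-vertex has at most two cycle-neighbours) for cycle components, which yields $iso(C_{2i+1}-S') \leq |S'| \leq \frac{n}{m}|S'|$. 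For $1) \Leftrightarrow 2)$ I invoke the classical characterization of fractional $[a,b]$-factors: $1) \Rightarrow 2)$ because $d_{G-S}(v) \geq 1$ for every non-isolated $v \in V \setminus S$, so the fractional-factor criterion reduces to $\frac{n}{m}|S| \geq iso(G-S)$, and conversely a fractional factor $h$ forces $iso(G-S) \leq \sum_{v \in Iso(G-S)} d^h(v) \leq \sum_{u \in S} d^h(u) \leq \frac{n}{m}|S|$. For $2) \Rightarrow 3)$, I rescale $h$ by $m$ to transform the problem into that of an integer $[m,n]$-factor on the multigraph $G^{(m)}$ obtained by replacing each edge of $G$ with $m$ parallel copies; the integrality of the $(g,f)$-factor polytope in the strict regime $g<f$ then produces an integer solution which, divided by $m$, gives values in $\{0,\frac{1}{m},\ldots,1\}$.

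The main work is $3) \Rightarrow 4)$. My plan is to choose $h$ realising $3)$ with $|\mathrm{supp}(h)|$ minimum and let $F$ denote its support, which spans $V(G)$ since $d^h(v) \geq 1$. I then argue that each component of $F$ has the required form in several steps. First, $F$ has no even cycle: alternately shifting $\pm \frac{1}{m}$ around an even cycle $C$ preserves all vertex degrees, so iterating drives some edge weight to $0$ (and in the corner case where every edge of $C$ has weight $1$, one can simply delete any edge since the endpoints retain degree $\geq 1$). Second, each component contains at most one cycle: two odd cycles in a common component give an even closed Eulerian walk along which the same alternation eliminates an edge without changing any degree. Third, a unicyclic component cannot have a pendant tree attached: I detach the tree at its cycle-vertex $v$ by zeroing $v$'s cycle-edges and replacing the weight on $C - v$ (a path, which always satisfies isolated toughness) by a factor obtained via $1) \Rightarrow 3)$ applied to $C-v$, leaving two smaller-support components. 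Fourth, if an odd-cycle component is $C_{2i+1}$ with $i \geq \frac{m}{n-m}$, then $P_{2i+1}$ already satisfies isolated toughness, so $1) \Rightarrow 3)$ on $P_{2i+1}$ furnishes a replacement factor with one fewer edge, contradicting minimality and forcing $i < \frac{m}{n-m}$. Finally, for each tree component $T$ the isolated-toughness bound follows from $2) \Rightarrow 1)$ applied to $T$, and edge-minimality holds because otherwise $1) \Rightarrow 3)$ on $T-e$ would yield a smaller-support replacement; thus $T \in \mathcal{T}_{\frac{n}{m}}$.

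The main obstacle will be the detachment arguments inside $3) \Rightarrow 4)$: when pruning a pendant tree from a unicyclic component, or eliminating one of two cycles, the weight removed from the cycle must be rerouted without violating the upper bound $\frac{n}{m}$ at the junction vertex or the discrete range $\{0,\frac{1}{m},\ldots,1\}$ on edges. The delicate case is when the junction vertex's degree already saturates $\frac{n}{m}$, so that the weight freed from the cycle cannot be absorbed by the incident tree edge; here one combines the alternation argument with a fresh application of $1) \Rightarrow 3)$ to the detached subtree in order to find a valid replacement, and one must verify that the aggregate support strictly decreases.
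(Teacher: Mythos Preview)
Your overall strategy matches the paper's: both minimize. In fact your support-minimal $h$ and the paper's edge-minimal spanning subgraph $F$ satisfying the isolated-vertex condition are the same object, since by $1)\Leftrightarrow 3)$ such an $F$ carries a fractional factor that must use every edge, and conversely your support $F$ is edge-minimal for the condition. The implication $4)\Rightarrow 1)$, the equivalence $1)\Leftrightarrow 2)\Leftrightarrow 3)$, and the cycle-length bound via $P_{2i+1}$ are all fine and essentially identical to the paper.

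The gaps are in the structural claims inside $3)\Rightarrow 4)$, and they share a single missing idea. First, in your even-cycle step the alternation may push a weight-$1$ edge above $1$; your ``all edges have weight $1$'' corner case does not cover mixtures. Second, for two odd cycles joined by a path your ``even closed Eulerian walk'' traverses the connecting path twice with the \emph{same} sign on each path edge (check the position parities), so those edges change by $\pm\frac{2}{m}$ and can leave $[0,1]$; this step does not work as written. Third, in the unicyclic-with-pendant case, zeroing $v$'s two cycle edges may drop $d^h(v)$ below $1$, and your proposed patch (``combine alternation with a fresh application of $1)\Rightarrow 3)$ to the detached subtree'') is not a proof: the detached subtree need not satisfy the $\frac{n}{m}$-condition on its own. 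The paper resolves all three at once by adding a \emph{secondary} minimization: for a chosen target edge $e$, take $h$ with $h(e)$ minimum and, subject to that, $\sum_{e'}h(e')$ minimum. The second condition forces the set of $(+)$-vertices to be stable (an edge between two $(+)$-vertices could be decremented), hence every non-pendant edge has weight strictly below $1$. With weights in the open interval on all cycle and connecting edges, each odd cycle contains two adjacent $(-)$-vertices, and one alternates along a single \emph{path} (not a repeated walk) ending at such a pair, possibly adjusting the terminal edge by $+\frac{1}{m}$; this handles both the two-cycle and the cycle-plus-leaf cases without any detachment surgery. You should replace your walk/detachment arguments with this device.
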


The paper is structured as follows. In Section~\ref{sec:I(G)_frac_fac} we give a relation between the isolated toughness and the existence of fractional factors, which proves the equivalence of $1)$, $2)$ and $3)$. In Section~\ref{sec:I(G)_comp_fac} we prove the equivalence of $1)$ and $4)$ by using fractional factors. In Section~\ref{sec:Tnm} we characterize the trees in $\mathcal{T}_{\frac{n}{m}}$ and deduce further structural properties.

\section{Isolated vertex conditions and fractional factors}\label{sec:I(G)_frac_fac}

There is a strong relation between the isolated toughness of a graph and the existence of fractional $[1,\frac{n}{m}]$-factors. When $\frac{n}{m}$ is an integer, Ma, Wang and Li \cite{ma2009isolated} obtained the following relation.

\begin{theo} [Ma, Wang, Li \cite{ma2009isolated}] \label{fractfactors0}
Let $G$ be a simple graph and $b > 1$ be an integer. Then
\begin{align*}
iso(G-S) \leq b \vert S \vert \quad \text{for all } S\subset V(G)
\end{align*}
 if and only if $G$ has a fractional $[1,b]$-factor.
\end{theo}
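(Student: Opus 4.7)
The plan is to prove the two directions separately. The ``only if'' direction is a short double-counting argument; the ``if'' direction, which I expect to be the main obstacle, I would reduce to a known characterization of fractional $(g,f)$-factors due to Anstee.

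For the ``only if'' direction, I would let $h$ be a fractional $[1,b]$-factor of $G$ and fix $S\subset V(G)$. Every $v\in Iso(G-S)$ has $\partial_G(v)\subseteq E_G(\{v\},S)$, so $d^h(v)\geq 1$ forces at least one unit of $h$-weight from $v$ into $S$. Double-counting gives
$$iso(G-S) \;\leq\; \sum_{v\in Iso(G-S)} d^h(v) \;\leq\; \sum_{w\in S} d^h(w) \;\leq\; b|S|,$$
where the middle inequality uses that every edge counted on the left has its other endpoint in $S$, and the last uses the upper bound $d^h(w)\leq b$.

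For the ``if'' direction I would invoke Anstee's characterization: $G$ has a fractional $(g,f)$-factor iff
$$f(S) \;+\; \sum_{v\in V(G)\setminus S} \min\{g(v),d_{G-S}(v)\} \;\geq\; g(V(G)\setminus S) \qquad \text{for every } S\subseteq V(G).$$
Specialising to $g\equiv 1$, $f\equiv b$, the summand $\min\{1,d_{G-S}(v)\}$ is $0$ precisely when $v\in Iso(G-S)$ and $1$ otherwise, so the left-hand side collapses to $b|S|+|V(G)\setminus S|-iso(G-S)$, while the right-hand side is $|V(G)\setminus S|$. Thus Anstee's condition is exactly $iso(G-S)\leq b|S|$ for all $S\subseteq V(G)$ (the case $S=\emptyset$ forcing $iso(G)=0$, which is needed so that every vertex can carry fractional degree at least $1$), and the hypothesis therefore delivers the desired fractional $[1,b]$-factor.

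The main obstacle is the ``if'' direction: without Anstee's theorem one would have to construct the fractional factor from scratch, for instance by LP duality or a max-flow argument on the bipartite vertex--edge incidence structure, where any deficient cut could be translated back into a set $S$ violating $iso(G-S)\leq b|S|$. Because Anstee's characterization predates the result of Ma, Wang and Li and is the standard tool in this area, I expect the cleanest route is simply to cite it; notice that the reduction above never uses integrality of $b$, so the argument is genuinely driven by the combinatorial structure rather than by arithmetic properties of $b$.
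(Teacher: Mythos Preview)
Your proof is correct, and it matches the paper's approach in all essentials. The paper does not give a separate proof of this cited theorem, but it does prove the generalisation (Theorem~\ref{fractfactors2}); specialised to $m=1$ (so that $G^*=G$), that proof is exactly your argument. The only cosmetic difference is that the paper invokes the \emph{integer} $(g,f)$-factor criterion of Anstee and Heinrich et~al.\ (Theorem~\ref{gffactors}) with $g\equiv 1$, $f\equiv b$, which is legitimate since $g<f$, and thereby obtains an honest $(1,b)$-factor --- hence a fractional $[1,b]$-factor with values in $\{0,1\}$ --- whereas you quote the fractional version of Anstee's criterion directly. As your own rewriting shows, the two criteria are the same inequality $g(T)-d_{G-S}(T)\le f(S)$ in disguise, so the two routes coincide; the paper's passage through the integer factor buys the extra conclusion that the fractional factor may be taken $\{0,1\}$-valued, which your version does not record but which the statement does not require. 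Your ``only if'' double-counting is identical to the paper's.
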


As shown by Yu, Kano and Lu \cite{Yu2019FractionalFC}, similar results are true for isolated $\frac{2}{n}$-tough graphs, where $n$ is an odd integer with $n \geq 3$.

\begin{theo} [Kano, Lu, Yu \cite{Yu2019FractionalFC}] \label{fractfactors1}
Let $G$ be a simple graph and $k\geq 1$ be an integer. Then
\begin{align*}
iso(G-S) \leq \frac{2k+1}{2} \vert S \vert \quad \text{for all } S\subset V(G)
\end{align*}
 if and only if $G$ has a fractional $[1,\frac{2k+1}{2}]$-factor with values in $\{0,\frac{1}{2},1\}$.
\end{theo}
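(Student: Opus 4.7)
The plan is to prove the two directions separately, with the forward (``only if'') direction by a direct degree-count and the backward (``if'') direction via a reduction to an integer factor problem on a doubled multigraph.

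For the forward direction, let $h$ be a fractional $[1,\frac{2k+1}{2}]$-factor of $G$, and fix $S \subset V(G)$. Every $v \in Iso(G-S)$ has all incident edges going to $S$, so $1 \leq d^h(v) = \sum_{e \in E_G(\{v\},S)} h(e)$. Summing over $Iso(G-S)$ and then switching the order of summation yields
$$iso(G-S) \leq \sum_{v \in Iso(G-S)} d^h(v) \leq \sum_{s \in S} d^h(s) \leq \frac{2k+1}{2}|S|,$$
as desired.

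For the backward direction, I would form the multigraph $G'$ from $G$ by replacing each edge $e$ with two parallel copies $e_1,e_2$. Every $[2,2k+1]$-factor $F$ of $G'$ yields a fractional $[1,\frac{2k+1}{2}]$-factor of $G$ with values in $\{0,\tfrac{1}{2},1\}$ via $h(e) := \tfrac{1}{2}|F \cap \{e_1,e_2\}|$, since then $d^h(v) = \tfrac{1}{2}d_F(v) \in [1,\tfrac{2k+1}{2}]$. It therefore suffices to invoke Lov\'asz's $(g,f)$-factor theorem on $G'$ with $g\equiv 2$ and $f\equiv 2k+1$. Because $g(v) < f(v)$ at every vertex, the parity-based ``odd component'' term in Lov\'asz's defect formula is automatically zero, so the existence of a $[2,2k+1]$-factor of $G'$ reduces to verifying that for every pair of disjoint $S,T \subseteq V(G)$,
$$(2k+1)|S| - 2|T| + \sum_{v\in T} d_{G'-S}(v) \geq 0.$$

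Using $d_{G'-S}(v) = 2 d_{G-S}(v)$, the inequality rearranges to $\sum_{v\in T} d_{G-S}(v) \geq |T| - \frac{2k+1}{2}|S|$. Setting $T' := \{v \in T : d_{G-S}(v)=0\}$, one has $T' \subseteq T \cap Iso(G-S)$, and the hypothesis yields $|T'| \leq iso(G-S) \leq \frac{2k+1}{2}|S|$; therefore $\sum_{v\in T} d_{G-S}(v) \geq |T| - |T'| \geq |T| - \frac{2k+1}{2}|S|$, closing the argument. The main obstacle is the careful application of Lov\'asz's theorem on the multigraph $G'$ and the verification that no parity-bad component can appear when $g$ and $f$ are pointwise distinct; beyond that the argument is essentially a clean reduction, with the entire combinatorial content compressed into the elementary bound $|T \cap Iso(G-S)| \leq iso(G-S) \leq \frac{2k+1}{2}|S|$.
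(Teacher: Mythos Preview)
Your argument is correct and essentially identical to the paper's: double each edge, apply a $(g,f)$-factor theorem with $g\equiv 2$, $f\equiv 2k+1$ (the paper cites the Anstee/Heinrich et al.\ form, which is precisely Lov\'asz's criterion with the parity term already suppressed since $g<f$), and translate the integer factor back to half-integer weights; the converse is the same degree-count. One cosmetic slip: you have the labels ``if'' and ``only if'' reversed---the degree-count establishes the \emph{if} direction (fractional factor $\Rightarrow$ isolated-vertex condition), and the multigraph reduction establishes the \emph{only if} direction.
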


It turned out that their proof also works for isolated $\frac{m}{n}$-tough graphs, where $n,m$ are arbitrary integers with $0<m<n$. By substituting $2k+1$ with $n$ and 2 with $m$ in the proof of Theorem \ref{fractfactors1} given in \cite{Yu2019FractionalFC}, this result can be extended as follows. 

\begin{theo} \label{fractfactors2}
Let $G$ be a simple graph and let $n,m$ be integers with $0<m<n$. Then
\begin{align} \label{isocondhilfstheo}
\tag{1}
iso(G-S) \leq \frac{n}{m} \vert S \vert \quad \text{for all } S\subset V(G)
\end{align}
if and only if $G$ has a fractional $[1,\frac{n}{m}]$-factor with values in $\{0,\frac{1}{m},...,\frac{m-1}{m},1\}$.
\end{theo}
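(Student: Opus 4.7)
For the forward direction, a direct double-counting suffices. Let $h$ be the fractional $[1,n/m]$-factor and fix $S \subset V(G)$. Each $v \in Iso(G-S)$ satisfies $d^h(v) \geq 1$ with every incident edge going into $S$, so
$$iso(G-S) \leq \sum_{v \in Iso(G-S)} d^h(v) \leq \sum_{v \in S} d^h(v) \leq \tfrac{n}{m}|S|.$$

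For the converse, I would reduce to an integer $(g,f)$-factor problem on a multigraph. Let $G^*$ be obtained from $G$ by replacing every edge with $m$ parallel copies. Functions $h: E(G) \to \{0,\tfrac{1}{m},\ldots,1\}$ are in natural bijection with spanning subgraphs $F$ of $G^*$, namely $F$ contains $m\cdot h(e)$ of the $m$ copies of $e$. Under this bijection $d_F(v) = m \cdot d^h(v)$, so the fractional $[1,n/m]$-factors of $G$ with values in $\{0,\tfrac{1}{m},\ldots,1\}$ correspond precisely to the $(m,n)$-factors of $G^*$.

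Next, I would apply Lov\'asz's $(g,f)$-factor theorem to $G^*$ with $g \equiv m$ and $f \equiv n$. The parity-correction term in Lov\'asz's criterion counts components $C$ of $G^* - (S \cup T)$ on which $g$ and $f$ coincide; since $m < n$, no such component exists and the criterion collapses to
$$n|S| + \sum_{v \in T} \bigl(d_{G^*-S}(v) - m\bigr) \geq 0 \quad \text{for all disjoint } S, T \subseteq V(G).$$
Substituting $d_{G^*-S}(v) = m\cdot d_{G-S}(v)$ and dividing by $m$ turns this into
$$\tfrac{n}{m}|S| - |T| + \sum_{v \in T} d_{G-S}(v) \geq 0.$$

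Finally, I would derive this inequality from the isolated vertex hypothesis. Split $T = T_0 \cup T_1$ where $T_0 = \{v \in T : d_{G-S}(v) = 0\}$. Then $\sum_{v \in T} d_{G-S}(v) \geq |T_1|$, so the left-hand side is at least $\tfrac{n}{m}|S| - |T_0|$, and since $T_0 \subseteq Iso(G-S)$ the hypothesis yields $|T_0| \leq \tfrac{n}{m}|S|$, closing the argument. The main obstacle is the appeal to Lov\'asz's theorem in the parity-free regime, which must be carefully justified by the strict inequality $m < n$; once this is in place, the remainder follows the same pattern as the proof of Theorem~\ref{fractfactors1} in \cite{Yu2019FractionalFC}, with $n$ and $m$ replacing $2k+1$ and $2$, as anticipated in the paragraph preceding the statement.
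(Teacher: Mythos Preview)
Your argument is correct and essentially identical to the paper's: both replace each edge by $m$ parallel copies to reduce the fractional $[1,n/m]$-factor to an integer $(m,n)$-factor of $G^*$, and both handle the other implication by the same double-counting. The only cosmetic differences are that your ``forward'' and ``converse'' labels are swapped relative to the statement's order, and that you invoke Lov\'asz's general $(g,f)$-factor theorem (arguing the parity term vanishes since $m<n$) whereas the paper directly cites the strict-inequality specialization of Anstee and Heinrich et al.\ (Theorem~\ref{gffactors}), in which that term is already absent.
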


For the sake of completeness, in the remainder of this section, we state the proof of \cite{Yu2019FractionalFC} (with the substitutions mentioned above) and deduce the equivalence of statements $1)$, $2)$ and $3)$ of Theorem~\ref{theo:equivalenz_iso_frac_component}.

The main tool to prove Theorem~\ref{fractfactors1} (respectively, Theorem~\ref{fractfactors2}) is provided by the next theorem. For a function $f:V(G) \to \mathbb{Z}^{+} \cup \{0\}$ and a set $X \subseteq V(G)$, set $f(X) := \sum_{x \in X} f(x)$.

\begin{theo} [Anstee \cite{ANSTEE199029}, Heinrich et al. \cite{heinrich_et_al_1990}] \label{gffactors}
Let $G$ be a graph and $g,f:V(G) \to \mathbb{Z}^{+} \cup \{0\}$ with $0 \leq g(x) < f(x)$ for all $x \in V(G)$. Then $G$ has a $(g,f)$-factor if and only if
\begin{align*}
g(T)-d_{G-S}(T) \leq f(S) \text{ for all } S \subset V(G),
\end{align*}
where $T=\{v \in V(G) \setminus S \colon d_{G-S}(v)<g(v)\}$.
\end{theo}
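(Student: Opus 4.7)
The plan is to derive the fractional factor directly from an integer $(g,f)$-factor on a suitable auxiliary multigraph, mirroring the $m=2$ proof of Theorem \ref{fractfactors1} with $(m,n)$ replacing $(2,2k+1)$. The easy direction (sufficiency) is a double-counting argument: if $h$ is a fractional $[1,\tfrac{n}{m}]$-factor and $S\subset V(G)$, every $v\in Iso(G-S)$ sends all its $h$-weight into $S$, so
$$iso(G-S)\leq \sum_{v\in Iso(G-S)} d^h(v)\leq \sum_{s\in S} d^h(s)\leq \tfrac{n}{m}|S|.$$

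For the converse I would construct an auxiliary multigraph $H$ from $G$ by replacing every edge by $m$ parallel copies (the paper explicitly allows parallel edges), and then apply Theorem \ref{gffactors} to $H$ with $g\equiv m$ and $f\equiv n$. The assumption $m<n$ supplies the required strict inequality $g(v)<f(v)$, so it only remains to verify $g(T)-d_{H-S}(T)\leq f(S)$ for every $S\subset V(G)$. Here the set $T$ collapses: since $d_{H-S}(v)=m\,d_{G-S}(v)$ for every $v\in V(G)\setminus S$, the defining condition $d_{H-S}(v)<m$ is equivalent to $d_{G-S}(v)=0$, hence $T=Iso(G-S)$ and $d_{H-S}(T)=0$. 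The inequality therefore reduces to $m\cdot iso(G-S)\leq n|S|$, which is precisely the hypothesis \eqref{isocondhilfstheo}.

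Having obtained a $(g,f)$-factor $F$ of $H$ satisfying $m\leq d_F(v)\leq n$, I would define $h:E(G)\to[0,1]$ by letting $h(e)$ equal $\tfrac{1}{m}$ times the number of parallel copies of $e$ lying in $F$. Then $h$ takes values in $\{0,\tfrac{1}{m},\ldots,\tfrac{m-1}{m},1\}$ by construction and $d^h(v)=\tfrac{1}{m}d_F(v)\in[1,\tfrac{n}{m}]$, giving exactly the fractional $[1,\tfrac{n}{m}]$-factor asked for.

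The main obstacle is conceptual rather than technical: one has to recognize that the $m$-fold edge blow-up is precisely the construction which simultaneously (i) turns fractional weights in $\{0,\tfrac{1}{m},\ldots,1\}$ into integer degrees, and (ii) collapses the $(g,f)$-factor hypothesis of Theorem \ref{gffactors} — a priori a family of inequalities indexed by all $S$ and all $T$ — onto the single family of isolated-vertex inequalities in \eqref{isocondhilfstheo}. Once the identification $T=Iso(G-S)$ and $d_{H-S}(T)=0$ is established, the rest is straightforward book-keeping.
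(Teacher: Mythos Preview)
Your write-up is not a proof of the stated theorem. The statement you were asked to prove is Theorem~\ref{gffactors} (the Anstee/Heinrich et al.\ characterization of $(g,f)$-factors), but you invoke Theorem~\ref{gffactors} as a black box in order to prove Theorem~\ref{fractfactors2} instead. The paper does not prove Theorem~\ref{gffactors} at all; it is a cited external result, so there is no ``paper's own proof'' to compare against, and your proposal supplies none either.

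If the intended target was actually Theorem~\ref{fractfactors2}, then your argument is correct and essentially identical to the paper's: the same $m$-fold edge blow-up $G^*$, the same choice $g\equiv m$, $f\equiv n$, the same identification $T=Iso(G-S)$ with $d_{G^*-S}(T)=0$, the same conversion of the integer factor back to a fractional one by dividing edge-multiplicities by $m$, and the same double-counting for the easy direction.
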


\begin{proof} [Proof of Theorem~\ref{fractfactors2} (cf. Kano, Lu, Yu \cite{Yu2019FractionalFC}).]
Assume that $G$ satisfies (\ref{isocondhilfstheo}). Let $G^{*}$ denote the graph obtained from $G$ by replacing each edge $e$ of $G$ by $m$ parallel edges $e(1),...,e(m)$. Then $V(G^{*})=V(G)$, and $d_{G^{*}}(v)=m \cdot d_{G}(v)$ for every $v \in V(G^{*})$. Define two functions $g,f: V(G^{*}) \to \mathbb{Z}^{+} \cup \{0\}$ as
\begin{align*}
g(x)=m \hspace{0.5cm} \text{ and } \hspace{0.5cm} f(x)=n \hspace{0.5cm} \text{ for all } x\in V(G^{*}).
\end{align*}
Then $g<f$, and for any $S \subset V(G^{*})$, we have
\begin{align*}
T&=\{v \in V(G^{*}) \setminus S \colon d_{G^{*}-S}(v)<g(v)=m\}\\
&=\{v \in V(G^{*}) \setminus S \colon d_{G^{*}-S}(v)=0\}\\
&=Iso(G-S).
\end{align*}
Thus it follows from the above equality and (\ref{isocondhilfstheo}) that
\begin{align*}
g(T)-d_{G^{*}-S}(T)&=m \cdot iso(G-S)-0\\
&\leq n \vert S \vert =f(S).
\end{align*}
Hence by Theorem \ref{gffactors}, $G^{*}$ has a $(g,f)$-factor $F$. Now we construct a fractional $[1,\frac{n}{m}]$-factor $h:E(G) \to \{0,\frac{1}{m},...,\frac{m-1}{m},1\}$ as follows: for every edge $e$ of $G$, define $h(e)=\frac{k(e)}{m}$ where $k(e)$ is the number of integers $i \in \{1,...,m\}$ with $e(i) \in E(F)$. It is easy to see that $h$ is the desired fractional $[1,\frac{n}{m}]$-factor with values in $\{0,\frac{1}{m},...,\frac{m-1}{m},1\}$.

Next assume that $G$ has a fractional $[1,\frac{n}{m}]$-factor $h$. Let $S \subset V(G)$, and let $F$ be the spanning subgraph of $G$ induced by $\{e \in E(G) \colon h(e) \neq 0\}$. Clearly, the neighbours of each isolated vertex $u$ of $G-S$ are contained in $S$ and $d^{h}(u) \geq 1$, thus we have
\begin{align*}
iso(G-S) &\leq \sum \limits_{e \in E_{F}(Iso(G-S),S)} h(e)\\
& \leq \sum \limits_{x \in S} d^{h}(x) \leq \frac{n}{m} \vert S \vert.
\end{align*}
Hence, $iso(G-S) \leq \frac{n}{m} \vert S \vert$, i.e. (\ref{isocondhilfstheo}) holds.
\end{proof}

The fact, $h$ has values in $\{0,\frac{1}{m},...,\frac{m-1}{m},1\}$, is not needed in the second part of the proof of Theorem \ref{fractfactors2}. As a consequence, we obtain the following corollary:

\begin{cor}
Let $G$ be a simple graph and let $n,m$ be integers with $0<m<n$. If $G$ has a fractional $[1,\frac{n}{m}]$-factor, then $G$ has a fractional $[1,\frac{n}{m}]$-factor with values in $\{0,\frac{1}{m},...,\frac{m-1}{m},1\}$.
\end{cor}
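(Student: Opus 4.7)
The plan is to simply chain the two directions of Theorem~\ref{fractfactors2} together, exploiting the observation emphasized just before the corollary: in the second half of the proof of Theorem~\ref{fractfactors2}, the value restriction on $h$ is never used.

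More precisely, I would first assume that $G$ has a fractional $[1,\frac{n}{m}]$-factor $h$, with no constraint on the image of $h$. Following the argument in the second part of the proof of Theorem~\ref{fractfactors2} verbatim, for any $S \subset V(G)$ one has
\begin{align*}
iso(G-S) \leq \sum_{e \in E_F(Iso(G-S),S)} h(e) \leq \sum_{x \in S} d^{h}(x) \leq \frac{n}{m}|S|,
\end{align*}
where $F$ is the spanning subgraph induced by the edges with $h(e)\neq 0$. Thus $G$ satisfies condition~(\ref{isocondhilfstheo}).

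Having established~(\ref{isocondhilfstheo}), I would then invoke the forward direction of Theorem~\ref{fractfactors2}, which yields a fractional $[1,\frac{n}{m}]$-factor of $G$ with values in $\{0,\frac{1}{m},\ldots,\frac{m-1}{m},1\}$, as required. Since both invoked facts are already proved, no new obstacle arises; the only thing to verify is that the second half of the proof of Theorem~\ref{fractfactors2} is genuinely independent of the value restriction on $h$, which is immediate by inspection of that computation.
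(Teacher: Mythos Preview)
Your proposal is correct and is exactly the argument the paper intends: use the second half of the proof of Theorem~\ref{fractfactors2} (which does not use the value restriction) to deduce condition~(\ref{isocondhilfstheo}) from an arbitrary fractional $[1,\frac{n}{m}]$-factor, and then apply the first half to obtain one with values in $\{0,\frac{1}{m},\ldots,1\}$. There is nothing to add.
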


Therefore, the equivalence of statements $1)$, $2)$ and $3)$ of Theorem~\ref{theo:equivalenz_iso_frac_component} is proved.

\section{Isolated vertex conditions and component factors}\label{sec:I(G)_comp_fac}

In this section we use Theorem~\ref{fractfactors2} to prove the following equivalence, which completes the proof of Theorem~\ref{theo:equivalenz_iso_frac_component}:

\begin{theo} \label{main result_isolated_vetex_conditions}
Let $G$ be a simple graph and let $n,m$ be integers with $0<m<n$. Then
\begin{align*}
iso(G-S) \leq \frac{n}{m} \vert S \vert \quad \text{for all } S\subset V(G)
\end{align*}
if and only if $G$ has a $\{C_{2i+1},T \colon 1 \leq i < \frac{m}{n-m}, T\in\mathcal{T}_{\frac{n}{m}}\}$-factor.
\end{theo}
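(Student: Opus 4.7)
My plan has two parts.

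For the easy direction $4) \Rightarrow 1)$, it suffices to verify the isolated-vertex inequality component by component and sum. For a tree $T \in \mathcal{T}_{\frac{n}{m}}$ the bound is the first defining property; for an odd cycle $C_{2i+1}$, each vertex isolated in $C_{2i+1} - S$ has both of its two cyclic neighbours in $S$, so a straightforward incidence count yields $iso(C_{2i+1} - S) \leq |S| \leq \frac{n}{m}|S|$ since $n > m$. A vertex isolated in $G - S$ is also isolated inside its own $F$-component, so summing the per-component inequalities gives $iso(G - S) \leq \frac{n}{m}|S|$.

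For the reverse direction, I would invoke Theorem~\ref{fractfactors2} to obtain a fractional $[1, \frac{n}{m}]$-factor $h$ with values in $\{0, \frac{1}{m}, \ldots, 1\}$ and, among all such $h$, choose one whose support $F := \{e \in E(G) : h(e) > 0\}$ has the fewest edges. The key consequence of this choice is that for every component $C$ of $F$, the restriction $h|_{E(C)}$ is itself a fractional $[1, \frac{n}{m}]$-factor of $C$, and $C$ is moreover \emph{edge-minimal} for the isolated condition: if some $e \in E(C)$ could be deleted so that $C - e$ still satisfied $iso((C - e) - S) \leq \frac{n}{m}|S|$ on $V(C)$, then Theorem~\ref{fractfactors2} would supply a fractional factor of $C - e$, which spliced into $h$ outside $C$ would yield a smaller support, a contradiction. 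Applying the easy direction of Theorem~\ref{fractfactors2} to $h|_{E(C)}$ gives the isolated condition on $C$, and together with edge-minimality this is precisely the definition of $\mathcal{T}_{\frac{n}{m}}$ when $C$ is a tree, so the tree-components of $F$ lie in $\mathcal{T}_{\frac{n}{m}}$.

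When $C$ contains a cycle, I would argue $C$ must be precisely an odd cycle $C_{2i+1}$ with $1 \leq i < \frac{m}{n-m}$. The length bound follows from edge-minimality: deleting any edge of $C_{2i+1}$ produces $P_{2i+1}$, on which $S = \{v_2, v_4, \ldots, v_{2i}\}$ yields $iso = i+1$ isolated vertices and $|S| = i$, so $\frac{i+1}{i} > \frac{n}{m}$, equivalently $i < \frac{m}{n-m}$. The main obstacle is then to exclude three remaining structural possibilities for $C$: an even cycle, cyclomatic number at least $2$, and a tree attached to a single odd cycle. Each case should be eliminated by a local modification of $h$ that drives some value to $0$ while preserving $d^h(v) \in [1, \frac{n}{m}]$: alternating $\pm\frac{1}{m}$ shifts along an even cycle handle case one; in case two, two cycles combine into an even closed walk supporting an analogous shift; in case three, a swap re-routes $h$ from the attached tree into the cycle (as in the triangle-plus-pendant example, where $(h(uv_1), h(v_1 v_2), h(v_2 v_3), h(v_3 v_1))$ can always be replaced by $(1, 0, 1, 0)$). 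The delicate point is that $h$ must remain in the discrete set $\{0, \frac{1}{m}, \ldots, 1\}$: in the boundary situations where a pure alternating shift is blocked by some $h$-value reaching $1$ before any reaches $0$, one has to combine the shift with a direct edge deletion, exploiting the extra headroom $d^h(v) \geq 1 + h(e)$ at the endpoints of a saturated edge.
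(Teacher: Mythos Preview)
Your plan matches the paper's proof in all essential respects: take an edge-minimal spanning subgraph $F$ satisfying the isolated-vertex condition (equivalently, the support of a minimum-support fractional $[1,\frac{n}{m}]$-factor), observe that every fractional factor of $F$ is nowhere zero, and then use alternating $\pm\frac{1}{m}$ shifts to rule out even closed trails, components with two circuits, and components containing both a circuit and a leaf. The classification of the surviving components as short odd circuits or trees in $\mathcal{T}_{\frac{n}{m}}$, including the bound $i<\frac{m}{n-m}$ via $P_{2i+1}$, is then exactly as you describe.

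The one place where your sketch is genuinely weaker than the paper is the handling of edges with $h(e)=1$. Your proposed fix---``combine the shift with a direct edge deletion, exploiting the extra headroom $d^h(v)\geq 1+h(e)$ at the endpoints of a saturated edge''---does not work as stated: if $e=uv$ is saturated and $e'$ is the neighbouring edge on the even trail at $v$, you can indeed afford to drop $h(e')$ to $0$ as far as $v$ is concerned, but you have no control over the \emph{other} endpoint of $e'$, which may then fall below $1$. The paper sidesteps this entirely by not committing to a single global $h$. Once $F$ is fixed, for each structural claim it re-selects $h$ among all fractional $[1,\frac{n}{m}]$-factors of $F$ so as first to minimise $h(e)$ on one designated edge $e$ of the offending configuration, and then, subject to that, to minimise $\sum_{e'}h(e')$. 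The second minimisation forces the set of $(+)$-vertices to be stable in $F$, so that every non-pendant edge has value strictly below $1$ and the alternating shift is unobstructed; the first minimisation then delivers the contradiction after a single shift. Since every fractional factor of $F$ has full support anyway (by your own edge-minimality argument), this re-selection is freely available to you, and substituting it for your ad hoc boundary handling closes the gap.
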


Observe that $\frac{m}{n-m} \leq 1$ if and only if $\frac{n}{m} \geq 2$, and hence, $\{C_{2i+1},T \colon 1 \leq i <\frac{m}{n-m}, T\in\mathcal{T}_{\frac{n}{m}}\}=\mathcal{T}_{\frac{n}{m}}$ in this case.

For two positive integers $n,m$, we say a graph $G$ satisfies the \emph{$\frac{n}{m}$-isolated-vertex-condition}, if $iso(G-S) \leq \frac{n}{m} \vert S \vert$ for all $S \subset V(G)$. To prove Theorem~\ref{main result_isolated_vetex_conditions} we need the following observation.

\begin{obs}\label{obs:isolated_vertex_condition_unconnected_graph}
A simple graph $G$ satisfies the $\frac{n}{m}$-isolated-vertex-condition, if and only if every component of $G$ satisfies the $\frac{n}{m}$-isolated-vertex-condition.
\end{obs}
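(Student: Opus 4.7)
The plan is to translate the global isolated-vertex-condition into a componentwise one by exploiting the fact that $iso(\cdot)$ is additive over connected components and that deleting vertices from one component does not affect any adjacency inside another. Concretely, for every $S\subseteq V(G)$ and every component $C$ of $G$, set $S_C:=S\cap V(C)$; then $G-S$ is the disjoint union of the graphs $C-S_C$ over all components $C$, so $iso(G-S)=\sum_{C} iso(C-S_C)$ and $|S|=\sum_C|S_C|$.

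For the ``if'' direction I would take an arbitrary $S\subset V(G)$ and use the above decomposition. For each component $C$ with $S_C=V(C)$ the summand $iso(C-S_C)$ equals $0$; for each $C$ with $S_C\subsetneq V(C)$ the componentwise hypothesis yields $iso(C-S_C)\leq \frac{n}{m}|S_C|$. Summing over all components then gives $iso(G-S)\leq\frac{n}{m}|S|$.

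For the ``only if'' direction I would first apply the global hypothesis to $S=\emptyset$ to obtain $iso(G)=0$, so that $G$ contains no singleton component and every component has at least two vertices (and therefore no isolated vertex in itself). Then, fixing a component $C$ and an arbitrary $S'\subset V(C)$, I would invoke the global condition with $S:=S'\subset V(G)$. Since every other component is left intact and is not a singleton, no vertex outside $C$ becomes isolated in $G-S'$, so $iso(C-S')=iso(G-S')\leq\frac{n}{m}|S'|$. The only mildly delicate point, namely the corner case $S'=\emptyset$, is handled precisely by the absence of singleton components in $G$.

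There is no real obstacle here; the statement is a bookkeeping observation rather than one demanding a clever argument, and the only thing to be vigilant about is the distinction between proper and improper subsets (so that the componentwise condition is applied only where its hypothesis $S_C\subsetneq V(C)$ is met, and the case $S_C=V(C)$ is dispatched trivially).
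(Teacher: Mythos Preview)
Your proposal is correct and follows essentially the same componentwise-additivity argument as the paper. The paper's ``only if'' direction is marginally simpler: rather than first ruling out singleton components to obtain the equality $iso(C-S')=iso(G-S')$, it just uses the trivial inequality $iso(C-S')\leq iso(G-S')$, which already suffices.
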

\begin{proof}
If $G$ satisfies the $\frac{n}{m}$-isolated-vertex-condition and $C$ is a component of $G$, then for every $S \subset V(C)$ we have
\begin{align*}
iso(C-S) \leq iso(G-S) \leq \frac{n}{m} \vert S \vert
\end{align*}
On the other hand, if $G$ is a graph with components $H_{1},...,H_{l}$ and every component satisfies the $\frac{n}{m}$-isolated-vertex-condition, then for each $S \subset V(G)$ we have
\begin{align*}
iso(G-S) = \sum\limits_{i=1}^{l} iso \left( H_{i}-(S \cap V(H_{i})) \right) \leq \sum\limits_{i=1}^{l} \frac{n}{m} \vert S \cap V(H_{i})) \vert = \frac{n}{m} \vert S \vert.
\end{align*}
\end{proof}

For a fractional $[1,b]$-factor $h$ of a graph $G$ and $v \in V(G)$, we call $v$ a \emph{$(+)$-vertex} if $d^{h}(v)>1$ and a \emph{$(-)$-vertex} if $d^{h}(v)=1$.

\begin{proof}[Proof of Theorem~\ref{main result_isolated_vetex_conditions}.]
First, assume that $G$ has a $\{C_{2i+1},T \colon 1\leq i < \frac{m}{n-m}, T\in\mathcal{T}_{\frac{n}{m}}\}$-factor $F$. Let $H_{1},...,H_{l}$ be the components of $F$. Clearly, every component of $F$ satisfies the $\frac{n}{m}$-isolated-vertex-condition and thus, $F$ also does. For every $S \subset V(G)$ each isolated vertex of $G-S$ is also an isolated vertex of $F-S$, and thus $iso(G-S) \leq iso(F-S) \leq \frac{n}{m} \vert S \vert$.

Next, assume $G$ satisfies $iso(G-S) \leq \frac{n}{m} \vert S \vert$ for all $S \subset V(G)$. Let $F$ be an inclusion-wise minimal factor of $G$, that also satisfies the $\frac{n}{m}$-isolated-vertex-condition. By Theorem \ref{fractfactors2}, $F$ has a fractional $[1,\frac{n}{m}]$-factor, whereas every spanning proper subgraph of $F$ does not admit such a fractional factor. In particular, for every $e \in E(F)$, the graph $F-e$ does not have a fractional $[1,\frac{n}{m}]$-factor. In conclusion, the following claim holds:

\begin{claim} \label{claim:frac_factor_no_0}
$h(e) \neq 0$ for every $e \in E(F)$ and every fractional $[1,\frac{n}{m}]$-factor $h$ of $F$.
\end{claim}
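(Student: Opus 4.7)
The plan is to argue by contradiction using the minimality of $F$ together with Theorem~\ref{fractfactors2}. Suppose that some fractional $[1,\frac{n}{m}]$-factor $h$ of $F$ assigns $h(e_0)=0$ to an edge $e_0\in E(F)$. The idea is that the value $0$ on $e_0$ means $e_0$ contributes nothing to any $d^h(v)$, so $h$ restricted to $E(F)\setminus\{e_0\}$ should still serve as a fractional $[1,\frac{n}{m}]$-factor, now of the proper spanning subgraph $F-e_0$.

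Concretely, I would first define $h':E(F-e_0)\to[0,1]$ by $h'(e):=h(e)$ for every $e\in E(F)\setminus\{e_0\}$. For every $v\in V(F)=V(F-e_0)$, the sum over $\partial_{F-e_0}(v)$ differs from the sum over $\partial_F(v)$ only possibly by the term $h(e_0)=0$, so $d^{h'}(v)=d^{h}(v)$, which lies in $[1,\frac{n}{m}]$. Hence $h'$ is a fractional $[1,\frac{n}{m}]$-factor of $F-e_0$.

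Now I would apply Theorem~\ref{fractfactors2} (the equivalence already established between the $\frac{n}{m}$-isolated-vertex-condition and the existence of such a fractional factor) to conclude that $F-e_0$ satisfies the $\frac{n}{m}$-isolated-vertex-condition. Since $F-e_0$ is a spanning proper subgraph of $F$, this contradicts the inclusion-wise minimality of $F$ among factors of $G$ satisfying the $\frac{n}{m}$-isolated-vertex-condition.

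There is essentially no obstacle here: the argument is a short two-line contradiction once one notices that deleting an edge of fractional weight $0$ does not change any $d^h(v)$. The only thing to make sure of is that the theorem cited may be applied to $F-e_0$ (it can, since it holds for any simple graph), and that $F-e_0$ is indeed a strict spanning subgraph of $F$ (which follows from $e_0\in E(F)$).
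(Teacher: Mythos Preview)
Your argument is correct and is exactly the reasoning the paper uses: the paragraph immediately preceding Claim~\ref{claim:frac_factor_no_0} already observes that, by minimality of $F$ together with Theorem~\ref{fractfactors2}, no spanning proper subgraph $F-e$ can admit a fractional $[1,\frac{n}{m}]$-factor, and your contradiction (restricting $h$ to $E(F)\setminus\{e_0\}$) is precisely how one derives the claim from that observation. The only cosmetic point is that Theorem~\ref{fractfactors2} is stated for factors with values in $\{0,\tfrac{1}{m},\dots,1\}$, but the direction you need (fractional factor $\Rightarrow$ isolated-vertex-condition) holds for arbitrary fractional $[1,\tfrac{n}{m}]$-factors, as the paper notes just after that theorem.
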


We now prove that $F$ is the desired factor.

A closed trail of length $k$ (of $F$) is a sequence $(v_{0},e_{0},v_{1},e_{1},...,e_{l-1},v_{l})$ of alternately vertices and edges of $F$ with $e_{i}=v_{i}v_{i+1}$ for all $i<l$ and $v_{0}=v_{l}$.

\begin{claim} \label{claim:No_closed_even_trail}
$F$ does not contain a closed trail of an even length.
\end{claim}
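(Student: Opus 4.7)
The plan is to derive a contradiction from the existence of a closed trail $W=(v_0,e_0,v_1,e_1,\ldots,e_{2l-1},v_{2l}=v_0)$ of even length by constructing, via an alternating flip along $W$, a new fractional $[1,\tfrac{n}{m}]$-factor of $F$ in which some trail edge receives value $0$. This would contradict Claim 1 and force the conclusion.

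The first step is to invoke Theorem~\ref{fractfactors2} to fix a fractional $[1,\tfrac{n}{m}]$-factor $h$ of $F$ (with values in $\{0,\tfrac{1}{m},\ldots,1\}$). By Claim 1, every trail edge satisfies $h(e_i)\geq\tfrac{1}{m}>0$. The key operation is the alternating perturbation along $W$:
\begin{equation*}
h_t(e_{2i}) := h(e_{2i}) + t, \qquad h_t(e_{2i+1}) := h(e_{2i+1}) - t,
\end{equation*}
with $h_t(e):=h(e)$ for $e\notin\{e_0,\ldots,e_{2l-1}\}$. Because $W$ is closed and has even length, consecutive trail edges have opposite indexed parities, so at every vertex $v$ on $W$ each visit contributes one $+t$ edge and one $-t$ edge. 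Thus $d^{h_t}(v)=d^h(v)$ for every $v\in V(F)$, meaning $h_t$ automatically satisfies the degree constraints of a fractional $[1,\tfrac{n}{m}]$-factor for any $t$; the only thing to verify is $h_t(e)\in[0,1]$ on $W$.

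I would then push $t$ to its maximal feasible positive value $t_+:=\min\bigl(\min_i h(e_{2i+1}),\,\min_i(1-h(e_{2i}))\bigr)$. By Claim 1, $\min_i h(e_{2i+1})>0$; if additionally no even-indexed edge is saturated at $1$, then $t_+>0$, and at $t=t_+$ either an odd-indexed edge reaches $0$ (in which case $h_{t_+}$ is a fractional $[1,\tfrac{n}{m}]$-factor with some edge of $F$ at value $0$, contradicting Claim 1 applied to $h_{t_+}$) or an even-indexed edge reaches $1$. The symmetric flip (with opposite parity roles) treats the analogous case where no odd-indexed edge is saturated at~$1$.

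The main obstacle is the configuration in which both an even- and an odd-indexed trail edge already have $h=1$, so that neither direction of the flip can be pushed beyond $0$ without violating $h\in[0,1]$. To handle this I would choose $h$ optimally from the start -- for instance, among all fractional $[1,\tfrac{n}{m}]$-factors of $F$ with values in $\{0,\tfrac{1}{m},\ldots,1\}$, pick one that minimises (say) $\sum_{e\in E(W)} h(e)$, or equivalently minimises the number of edges of $W$ with $h(e)=1$. For such an extremal $h$, if the flip in either direction produced a valid fractional factor whose objective value were smaller, this would contradict the choice of $h$; pushing this comparison through should force the minimising $h$ to satisfy $h(e)=0$ on some trail edge, contradicting Claim 1. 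I expect this bookkeeping on a well-chosen $h$, rather than the flip itself, to be the delicate part of the argument.
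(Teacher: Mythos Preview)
Your alternating-flip idea is exactly right and is the tool the paper uses. The gap is in your extremal choice of $h$. The objective $\sum_{e\in E(W)} h(e)$ is \emph{invariant} under the flip $h\mapsto h_t$: half the trail edges gain $t$ and half lose $t$, so the sum over $E(W)$ is unchanged for every $t$. Minimising this sum therefore gives you no leverage whatsoever against flipping, and your argument stalls precisely in the bad case you identified (both parities contain an edge at value $1$). Note also that minimising this sum is \emph{not} equivalent to minimising the number of trail edges with value $1$; and even the latter objective does not obviously move under a single flip, so it is not clear how to extract a contradiction from it either.

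The paper repairs this with a different two-level extremal choice. Fix one edge $e$ of the trail and, among all fractional $[1,\tfrac{n}{m}]$-factors of $F$ with values in $\{0,\tfrac1m,\dots,1\}$, take $h$ with $h(e)$ minimal, and subject to that with $\sum_{e'\in E(F)}h(e')$ minimal (over \emph{all} edges of $F$, not just the trail). The secondary minimisation forces the set of $(+)$-vertices to be independent in $F$: an edge between two $(+)$-vertices could have its value lowered by $\tfrac1m$ without violating any constraint, contradicting minimality of the total. Combined with Claim~1, this means an edge has value $1$ if and only if it is a pendant edge of $F$. Since every vertex of a closed trail has degree at least $2$ in $F$, no trail edge is pendant, so $\tfrac1m\le h(e')\le 1-\tfrac1m$ for every $e'\in E(W)$. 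Now the flip by $\tfrac1m$ in the direction that decreases $h(e)$ is valid and yields a fractional $[1,\tfrac{n}{m}]$-factor with a strictly smaller value on $e$, contradicting the primary minimisation.
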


\emph{Proof of Claim \ref{claim:No_closed_even_trail}.}
Suppose $F$ contains a closed trail $X$ of an even length. Let $e$ be an arbitrary edge of $X$. Now fix a fractional $[1,\frac{n}{m}]$-factor $h$ of $F$ with values in $\{0,\frac{1}{m},...,\frac{m-1}{m},1\}$, such that
\begin{itemize}
\item[$(i)$] $h(e)$ is as small as possible,
\item[$(ii)$] with respect to $(i)$, $\sum_{e' \in E(F)}h(e')$ is as small as possible.
\end{itemize}
Now suppose, there is an edge $e' \in E(F)$ between two $(+)$-vertices. By Claim~\ref{claim:frac_factor_no_0}, the edge $e'$ did not receive the value 0. Thus, reducing $h(e')$ by $\frac{1}{m}$ leads to a new fractional $[1,\frac{n}{m}]$-factor with a smaller sum, which contradicts the choice of $h$. Therefore, the set of $(+)$-vertices (with respect to $h$) is stable in $F$. This implies, that an edge of $F$ received the value 1 if and only if it is incident with a vertex of degree 1 in $F$. As a consequence, $h(e')<1$ for every edge $e'$ of $X$. Now we modify the fractional factor $h$ as follows: add $\frac{1}{m}$ and $-\frac{1}{m}$ alternately to the edges of $X$ such that $-\frac{1}{m}$ is added to $e$ (see Figure \ref{fig:1_isolated_vertex_conditions}).

\begin{figure}[htbp]
	\centering
	\scalebox{1}{\input{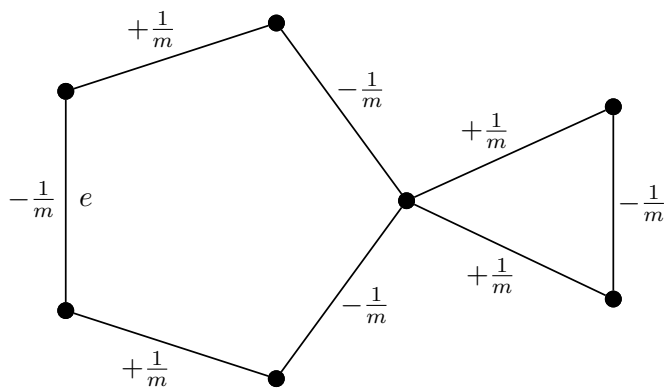}} 
	\caption{The modifying of $h$ if $F$ contains a closed trail of an even length.}
	\label{fig:1_isolated_vertex_conditions}
\end{figure}

Since no edge of $X$ had the value 0 or 1, this led to a new fractional $[1,\frac{n}{m}]$-factor $h'$ of $F$ with values in $\{0,\frac{1}{m},...,\frac{m-1}{m},1\}$. This contradicts the choice of $h$, since $h'(e)=h(e)-\frac{1}{m}$.
\ENDproof
As a consequence the following claims hold:

\begin{claim} \label{claim:No_even_circuit}
$F$ does not contain an even circuit.
\end{claim}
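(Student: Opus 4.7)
The plan is to derive Claim~\ref{claim:No_even_circuit} as an immediate corollary of Claim~\ref{claim:No_closed_even_trail}. A circuit $C$ with $2k$ vertices is $2$-regular and connected, so it has exactly $2k$ edges, and traversing its vertices cyclically in either direction produces a closed trail of length $2k$ in $F$ (no edge is repeated, since $C$ is a simple cycle). Hence, if $F$ contained an even circuit as a subgraph, $F$ would contain a closed trail of even length, contradicting Claim~\ref{claim:No_closed_even_trail}.

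So the proof will simply consist of this one-line reduction: suppose for contradiction that $F$ contains an even circuit $C$ on vertices $v_{0},v_{1},\dots,v_{2k-1},v_{0}$; then the sequence $(v_{0},e_{0},v_{1},e_{1},\dots,e_{2k-1},v_{0})$, where $e_{i}=v_{i}v_{i+1 \bmod 2k}$, is a closed trail of $F$ of even length $2k$, which is impossible by Claim~\ref{claim:No_closed_even_trail}.

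There is essentially no obstacle here; the only thing to be slightly careful about is the definitional point that a circuit in this paper is a $2$-regular connected graph (so its edges are pairwise distinct), which guarantees that the resulting closed walk is indeed a trail.
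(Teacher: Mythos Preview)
Your proposal is correct and matches the paper's approach exactly: the paper simply states Claim~\ref{claim:No_even_circuit} as an immediate consequence of Claim~\ref{claim:No_closed_even_trail} without writing out any further argument, which is precisely the one-line reduction you give.
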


\begin{claim} \label{claim:No_circuits_with_common_edge}
$F$ does not contain two circuits that share an edge.
\end{claim}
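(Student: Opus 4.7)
The plan is to derive a contradiction with Claim~\ref{claim:No_closed_even_trail} by producing a closed trail of even length in $F$. Suppose, for contradiction, that two distinct circuits $C_1$ and $C_2$ of $F$ share an edge; by Claim~\ref{claim:No_even_circuit} both are odd. Consider the symmetric difference $H := E(C_1) \triangle E(C_2)$, viewed as a subgraph of $F$. At every vertex $v$ one checks that
\begin{equation*}
d_H(v) = d_{C_1}(v) + d_{C_2}(v) - 2\,\bigl|\{e \in E(C_1)\cap E(C_2) : v \in e\}\bigr|
\end{equation*}
is even, and that $|E(H)| = |C_1|+|C_2|-2|E(C_1)\cap E(C_2)|$ is even because $|C_1|$ and $|C_2|$ are odd. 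Thus $H$ decomposes into edge-disjoint circuits $D_1,\dots,D_k$ of $F$, each of odd length by Claim~\ref{claim:No_even_circuit}, and $k\ge 2$ is even (since $C_1\neq C_2$ forces $H$ to be nonempty).

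If two of these circuits $D_i,D_j$ share a vertex $w$ in $F$, then traversing $D_i$ and then $D_j$ starting from $w$ yields a closed trail of even length $|D_i|+|D_j|$ (its edges are all distinct because the $D$'s are edge-disjoint), contradicting Claim~\ref{claim:No_closed_even_trail}. Otherwise the $D_i$ are pairwise vertex-disjoint in $F$ and are precisely the connected components of $H$. I would then build the desired closed trail by ``stitching'' the $D_i$ together with the maximal shared paths $Q_1,\dots,Q_t$ of $E(C_1)\cap E(C_2)$. Form the multigraph $M$ on vertex set $\{D_1,\dots,D_k\}$ whose edges are the $Q_j$, each placed between the two components of $H$ containing its endpoints. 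A parity count of the boundary vertices on each odd cycle $D_i$ (the vertices where $C_1$-only and $C_2$-only runs of edges meet come in pairs along $D_i$) shows that $M$ has all even degrees, and $M$ is connected because $C_1\cup C_2$ is. An Eulerian circuit of $M$ lifts to a closed walk in $F$ that traverses each $Q_j$ exactly once and, during each visit to a $D_i$, follows one of the two arcs of $D_i$ between its entry and exit vertex; since $|D_i|$ is odd, these two arcs have opposite parities, which provides independent parity adjustments of the total length.

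The main obstacle I expect is ensuring that the lifted walk is in fact a trail: the arc choices inside each $D_i$ over its several visits must be simultaneously edge-disjoint on $D_i$. I expect this to follow from choosing the Eulerian circuit of $M$, together with the local entry/exit pairings at each $D_i$, in a compatible ``non-crossing'' way, for instance by using every second arc between consecutive boundary vertices on $D_i$. Once edge-disjointness is secured, one parity choice (available because $t\ge 1$) suffices to make the total length even, producing the promised closed trail and contradicting Claim~\ref{claim:No_closed_even_trail}.
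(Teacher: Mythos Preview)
Your symmetric-difference decomposition is a sound start, and the case where two of the $D_i$ share a vertex is handled correctly. The gap is in the ``otherwise'' branch. Lifting an Eulerian circuit of $M$ to a closed trail in $F$ requires that, at every $D_i$, the pairing of entry/exit points induced by the Eulerian circuit be non-crossing on the cycle $D_i$; for an arbitrary Eulerian circuit of an arbitrary even multigraph with prescribed cyclic orders at the vertices this is a genuine constraint, not an automatic feature, and you do not verify it. Worse, even if you pick the non-crossing Eulerian circuit that $C_1$ itself induces on $M$ (traversing the $Q_j$'s in $C_1$-order and using the $C_1$-only arcs inside each $D_i$), the resulting trail is just $C_1$, and flipping any arc to adjust parity immediately collides with another $C_1$-arc on the same $D_i$ whenever $D_i$ is visited more than once. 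So the parity freedom you need is exactly what the multi-visit situation destroys.

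The clean repair is to abandon the Eulerian circuit and take merely a \emph{cycle} in $M$. Such a cycle exists because every $D_i$ contains both $C_1$-only and $C_2$-only edges (otherwise $D_i=C_1$ or $D_i=C_2$, forcing $E(C_1)\cap E(C_2)=\emptyset$), so $\deg_M(D_i)\ge 2$, and $M$ is connected. On a cycle of $M$ each $D_i$ occurs at most once; the lift (each $Q_j$ on the cycle once, plus one arc of each visited $D_i$) is then automatically a closed trail, and one arc-flip on a single $D_i$ adjusts the parity to even, contradicting Claim~\ref{claim:No_closed_even_trail}. This is also far closer in spirit to the paper's own argument, which simply selects two circuits $C,C'$ whose common edges form a single path $P$ and observes that $E(C)\cup E(C')\setminus E(P)$ has even size $|C|+|C'|-2|P|$, yielding an even circuit directly and contradicting Claim~\ref{claim:No_even_circuit}.
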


\emph{Proof of Claim \ref{claim:No_circuits_with_common_edge}.}
Suppose Claim~\ref{claim:No_circuits_with_common_edge} is false. Then $F$ contains two circuits $C, C'$ such that their common edges induce a path $P$ in $F$. By Claim~\ref{claim:No_even_circuit}, the circuits $C, C'$ are odd and thus the graph induced by $E(C) \cup E(C')-E(P)$ is an even circuit. This contradicts Claim~\ref{claim:No_even_circuit}.
\ENDproof

\begin{claim} \label{claim:No_circuits_with_common_vertex}
$F$ does not contain two circuits that share a vertex.
\end{claim}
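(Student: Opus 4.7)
The plan is to argue by contradiction: suppose $F$ contains two distinct circuits $C$ and $C'$ with a common vertex $v$. By Claim~\ref{claim:No_circuits_with_common_edge}, $C$ and $C'$ have no edge in common, and by Claim~\ref{claim:No_even_circuit} both $C$ and $C'$ are odd. My aim is to produce a closed trail of even length in $F$, which will contradict Claim~\ref{claim:No_closed_even_trail}.

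To that end, I would consider the subgraph $H$ of $F$ whose edge set is $E(C)\cup E(C')$. Since $C$ and $C'$ share no edges, every vertex of $H$ that lies in exactly one of $C,C'$ has $H$-degree $2$, while every vertex in $V(C)\cap V(C')$ has $H$-degree $4$; in particular every vertex of $H$ has even degree. Moreover $H$ is connected, because $C$ and $C'$ are each connected and share the vertex $v$. Applying Euler's theorem to $H$, it admits a closed trail $X$ traversing every edge of $H$ exactly once. The length of $X$ equals $|E(C)|+|E(C')|$, which is the sum of two odd numbers and hence even.

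This closed trail $X$ of even length sits inside $F$, contradicting Claim~\ref{claim:No_closed_even_trail}. Therefore no two circuits of $F$ can share a vertex, proving Claim~\ref{claim:No_circuits_with_common_vertex}.

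I do not expect any real obstacle here; the only subtlety is to notice that a single shared vertex (rather than a shared path, as treated in Claim~\ref{claim:No_circuits_with_common_edge}) already suffices, because $E(C)\cup E(C')$ is an even, connected multigraph and therefore Eulerian — which turns "two odd circuits meeting" directly into an even closed trail.
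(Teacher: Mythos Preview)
Your proof is correct and follows essentially the same approach as the paper: use Claim~\ref{claim:No_circuits_with_common_edge} to get edge-disjointness, Claim~\ref{claim:No_even_circuit} to get that both circuits are odd, and then observe that $E(C)\cup E(C')$ yields a closed even trail, contradicting Claim~\ref{claim:No_closed_even_trail}. The paper states the last step without justification, whereas you spell out the Eulerian argument explicitly.
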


\emph{Proof of Claim \ref{claim:No_circuits_with_common_vertex}.}
Suppose $F$ contains two circuits $C, C'$ that share a vertex. By Claim~\ref{claim:No_even_circuit}, $C, C'$ are odd circuits; by Claim~\ref{claim:No_circuits_with_common_edge}, $E(C) \cap E(C')= \emptyset$. Hence, the edgeset $E(C)  \cup E(C')$ provides a closed trail of an even length, which contradicts Claim~\ref{claim:No_closed_even_trail}.
\ENDproof

\begin{claim} \label{claim:No_circuits_connected_by_path}
$F$ does not contain two disjoint circuits that are connected by a path.
\end{claim}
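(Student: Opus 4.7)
The plan is to adapt the argument used in the proof of Claim~\ref{claim:No_closed_even_trail}, replacing the alternating update along an even closed trail by a signed update supported on the subgraph $H = C_1 \cup P \cup C_2$. Suppose for contradiction that $F$ contains two disjoint circuits $C_1, C_2$ joined by a path $P = (v_1 = u_0, u_1, \ldots, u_\ell = v_2)$ with $v_1 \in V(C_1)$, $v_2 \in V(C_2)$, and $P$ internally disjoint from $V(C_1) \cup V(C_2)$; by Claims~\ref{claim:No_even_circuit} and~\ref{claim:No_circuits_with_common_vertex} both circuits are odd and share no vertex, so $\ell \geq 1$. Fix an arbitrary edge $e \in E(P)$ and choose a fractional $[1,\frac{n}{m}]$-factor $h$ of $F$ (with values in $[0,1]$; the set of such factors is a compact polytope) satisfying (i) $h(e)$ is as small as possible and (ii) subject to (i), $\sum_{e'\in E(F)} h(e')$ is as small as possible.

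The first step is to rerun the independence argument from the proof of Claim~\ref{claim:No_closed_even_trail}: the secondary minimisation in (ii) together with Claim~\ref{claim:frac_factor_no_0} forces the set of $(+)$-vertices of $h$ to be independent in $F$, so every edge of $F$ with $h$-value $1$ must be incident to an $F$-leaf. Since no vertex of $H$ is an $F$-leaf (every vertex of $H$ has $F$-degree at least $2$), this yields $h(e') \in (0,1)$ for every $e' \in E(H)$, which is exactly what lets the forthcoming modification move every edge of $H$ by a small amount in either direction.

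The core construction is a signed update $\Delta h$ supported on $E(H)$, depending on a parameter $\lambda > 0$. On $C_1$ I alternate $\pm\lambda$ around the circuit in such a way that both edges incident to $v_1$ receive the sign $+$; this is possible precisely because $C_1$ has odd length, and produces a net change of $+2\lambda$ at $v_1$ and $0$ at every other vertex of $C_1$. On $P$ I set $\Delta h(p_j) = (-1)^j \cdot 2\lambda$ for each edge $p_j = u_{j-1}u_j$, which contributes $-2\lambda$ at $v_1$, $0$ at every internal $u_i$, and $(-1)^\ell \cdot 2\lambda$ at $v_2$. On $C_2$ I apply the analogous $\pm\lambda$-alternation with $v_2$ as the doubled vertex, with the overall sign chosen (depending on the parity of $\ell$) so that the $C_2$-contribution at $v_2$ cancels that of $P$. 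By construction, $d^{h+\Delta h}(v) = d^h(v)$ for every $v \in V(F)$.

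Finally, since $h(e') \in (0,1)$ on $E(H)$, one can choose $\lambda > 0$ small enough that $h' := h + \Delta h$ still takes values in $[0,1]$ on every edge and is therefore a valid fractional $[1,\frac{n}{m}]$-factor of $F$. By flipping the overall sign of $\lambda$ if needed, the contribution at the chosen edge $e$ can be arranged to equal $-2\lambda$, so $h'(e) = h(e) - 2\lambda < h(e)$, contradicting (i). The main obstacle is setting up the three alternations on $C_1, P, C_2$ so that the degree changes cancel simultaneously at both $v_1$ and $v_2$: this relies crucially on the oddness of each $C_i$, which ensures that a $\pm\lambda$-alternation around an odd circuit produces a degree change concentrated at a single chosen vertex, and on the parity-dependent freedom to flip the alternation direction on $C_2$. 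A secondary technical point is that one must allow real-valued fractional factors, rather than restricting to $\{0,\frac{1}{m},\ldots,1\}$-valued ones as in Claim~\ref{claim:No_closed_even_trail}, since the modification parameter $\lambda$ is in general not a multiple of $\frac{1}{m}$.
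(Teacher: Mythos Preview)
Your argument is correct. The compactness of the real-valued fractional-factor polytope gives a minimiser for~(i) and~(ii); Claim~\ref{claim:frac_factor_no_0} applies to arbitrary fractional $[1,\frac{n}{m}]$-factors (not only $\frac{1}{m}$-valued ones), so the independence-of-$(+)$-vertices step goes through and yields $h(e')\in(0,1)$ on all of $H$; and your $\Delta h$ is a genuine degree-preserving circulation on the dumbbell, so $h\pm\Delta h$ is again feasible for small $\lambda$ and one of the two signs strictly decreases $h(e)$.

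The route, however, differs from the paper's. The paper stays within $\{0,\frac{1}{m},\ldots,1\}$-valued factors: after observing that each odd $C_i$ contains a pair of adjacent $(-)$-vertices, it builds a single path $P'$ through $P$ between such a pair on $C$ and such a pair on $C'$, and alternates $\pm\frac{1}{m}$ along $P'$ (possibly adding $+\frac{1}{m}$ on the terminal edge). This modification may raise $d^h$ at up to four vertices, but only at $(-)$-vertices, so feasibility is preserved. Your construction instead uses the whole dumbbell and two different amplitudes ($\pm\lambda$ on the circuits, $\pm 2\lambda$ on $P$), forcing a move to real-valued factors; in exchange you get a degree-preserving update and need not locate $(-)$--$(-)$ edges on the circuits. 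The paper's version keeps the discrete framework used in the surrounding claims and needs no compactness argument, while yours is the cleaner linear-programming ``move along an improving direction'' picture.
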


\emph{Proof of Claim \ref{claim:No_circuits_connected_by_path}.}
Suppose $F$ contains two disjoint circuits $C, C'$ that are connected by a path $P$. Let $e$ be an arbitrary edge of $P$. Now fix a fractional $[1,\frac{n}{m}]$-factor $h$ of $F$ with values in $\{0,\frac{1}{m},...,\frac{m-1}{m},1\}$, such that
\begin{itemize}
\item[$(i)$] $h(e)$ is as small as possible,
\item[$(ii)$] with respect to $(i)$, $\sum_{e' \in E(F)}h(e')$ is as small as possible.
\end{itemize}
Again, no two $(+)$-vertices are adjacent in $F$. This implies, $h(e')<1$ for all $e' \in E(C) \cup E(C') \cup E(P)$. Since $C$ and $C'$ are odd by Claim~\ref{claim:No_even_circuit}, both circuits contain adjacent $(-)$-vertices. In conclusion, there is a path $P'=(v_{1},...,v_{l})$ such that $E(P') \subset E(C) \cup E(C') \cup E(P)$, $e \in E(P')$ and $v_{1},v_{2}$ are two $(-)$-vertices of $C$ and $v_{l-1},v_{l}$ are two $(-)$-vertices of $C'$. Now, add $\frac{1}{m}$ and $-\frac{1}{m}$ alternately to the edges of $P'-\{v_{1}v_{2},v_{l-1}v_{l}\}$ such that $-\frac{1}{m}$ is added to $e$. If $v_{2}v_{3}$ or $v_{l-2}v_{l-1}$ received $-\frac{1}{m}$, add $\frac{1}{m}$ to $v_{1}v_{2}$ or $v_{l-1}v_{l}$, respectively. An example is shown in Figure \ref{fig:2_isolated_vertex_conditions}.

\begin{figure}[htbp]
	\centering
	\scalebox{1}{\input{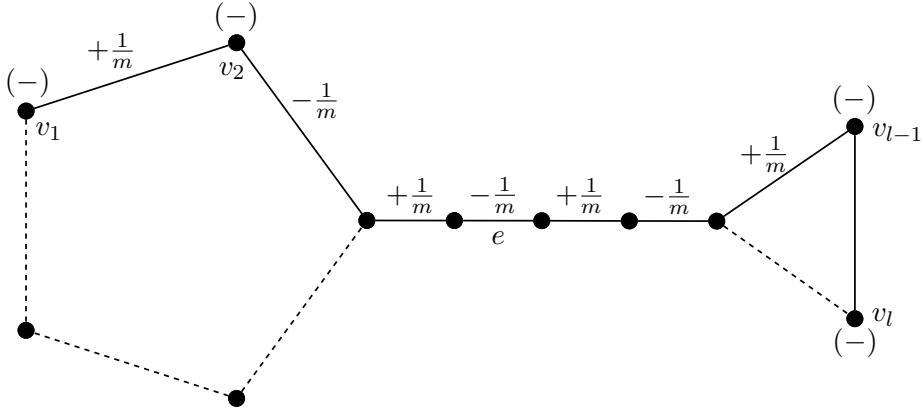}} 
	\caption{The modifying of $h$ if $F$ contains two disjoint circuits connected by a path. The solid edges are the edges of $P'$.}
	\label{fig:2_isolated_vertex_conditions}
\end{figure}

The resulting function $h'$ has values in $\{0,\frac{1}{m},...,\frac{m-1}{m},1\}$, since no edge of $C$,  $C'$ or $P$ had the value 0 or 1 before. Furthermore, we have $d^{h'}(v) \in \{d^{h}(v), d^{h}(v)+\frac{1}{m}\}$ for every $v \in \{v_{1},v_{2},v_{l-1},v_{l}\}$ and $d^{h'}(w)=d^{h}(w)$ for every other vertex $w$. Since $v_{1},v_{2},v_{l-1}$ and $v_{l}$ are $(-)$-vertices (with respect to $h$), $h'$ is a fractional $[1,\frac{n}{m}]$-factor of $F$ with values in $\{0,\frac{1}{m},...,\frac{m-1}{m},1\}$. This contradicts the choice of $h$, since $h'(e)=h(e)-\frac{1}{m}$.
\ENDproof

\begin{claim} \label{claim:No_circuit_and_deg1_vertex}
No component of $F$ contains a circuit and a vertex of degree 1.
\end{claim}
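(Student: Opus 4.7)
\emph{Plan.} I would argue by contradiction via a shift on a fractional $[1,\frac{n}{m}]$-factor $h$, in the spirit of Claims~\ref{claim:No_closed_even_trail} and~\ref{claim:No_circuits_connected_by_path}. Suppose some component $H$ of $F$ contains both an odd circuit $C = (c_0, c_1, \ldots, c_{2i})$ (odd by Claim~\ref{claim:No_even_circuit}) and a vertex $u$ of degree~$1$. Let $P = (u = v_1, v_2, \ldots, v_l)$ be a shortest path in $H$ from $u$ to $V(C)$, with $v_l =: c_0$. Every fractional $[1,\frac{n}{m}]$-factor $h$ of $F$ with values in $\{0,\frac{1}{m},\ldots,1\}$ satisfies $h(uv_2) = 1$ (since $d^h(u) \geq 1$ and $uv_2$ is $u$'s only edge), and by Claim~\ref{claim:frac_factor_no_0} this forces $v_2$ to be a $(+)$-vertex. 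If $h$ additionally minimizes $\sum_{e' \in E(F)} h(e')$, then (as in the proofs of Claims~\ref{claim:No_closed_even_trail} and~\ref{claim:No_circuits_connected_by_path}) the set of $(+)$-vertices is independent in $F$, and every edge between two non-leaves has $h$-value in $\{\frac{1}{m}, \ldots, \frac{m-1}{m}\}$.

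If $l = 2$, take $e^* := c_0 c_1$ and additionally let $h$ minimize $h(e^*)$. Since $c_0 = v_2$ is $(+)$, its $C$-neighbors $c_1, c_{2i}$ are $(-)$. Shift $h$ by $-\frac{1}{m}$ on $c_0 c_1$ and alternately $\pm\frac{1}{m}$ along the $2i$ edges $c_1 c_2, c_2 c_3, \ldots, c_{2i-1} c_{2i}$. The net change is $-\frac{1}{m}$ at $c_0$ (absorbed since $c_0$ is $(+)$), $+\frac{1}{m}$ at $c_{2i}$ (absorbed since $c_{2i}$ is $(-)$ and $\frac{n}{m} \geq 1 + \frac{1}{m}$), and $0$ at all internal vertices; all shifted edge values stay in $\{0,\frac{1}{m},\ldots,1\}$. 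The resulting $h'$ is a valid fractional $[1,\frac{n}{m}]$-factor with $h'(e^*) = h(e^*) - \frac{1}{m}$, contradicting the choice of $h$.

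If $l \geq 3$, take $e^* := v_2 v_3$ (so $v_3$ is $(-)$ by $(+)$-independence) and build a walk $W = (v_2, v_3, \ldots, v_l = c_0, c_{\sigma(1)}, c_{\sigma(2)}, \ldots, c_{\sigma(j)})$ that traverses $P$ to $c_0$ and then a prefix of $C$ in some direction $\sigma \in \{+1,-1\}$ (where $c_{\sigma(k)}$ denotes $c_k$ or $c_{2i+1-k}$). Assigning $-\frac{1}{m}$ to $e^*$ and alternating $\pm\frac{1}{m}$ on the remaining edges of $W$ produces net change $-\frac{1}{m}$ at $v_2$ (absorbed), $0$ at every internal walk vertex (two walk edges with opposite signs), and $(-1)^{l+j}\cdot\frac{1}{m}$ at the endpoint $c_{\sigma(j)}$. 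Call $(\sigma, j) \in \{+1,-1\} \times \{1, \ldots, 2i\}$ \emph{valid} if $c_{\sigma(j)}$ absorbs this endpoint change, that is, $c_{\sigma(j)}$ is $(-)$ with $l+j$ even or $(+)$ with $l+j$ odd.

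The main obstacle is to show that some valid $(\sigma, j)$ exists. If no $j$ is valid for $\sigma = +1$ and $l$ is even, then every even $j \geq 2$ forces $c_j = (+)$ and every odd $j$ forces $c_j = (-)$, so by $(+)$-independence in $C$ we have $c_0 = (-)$; for $\sigma = -1$, the position $j = 1$ is $c_{2i}$, which in this pattern is $(+)$, while $l+1$ is odd, so $(\sigma, j) = (-1, 1)$ is valid. The case $l$ odd is analogous, and a direct check shows the no-valid patterns for $\sigma = +1$ and $\sigma = -1$ assign opposite types at each index, so they cannot both occur. Once a valid $(\sigma, j)$ is fixed, all shifted edges of $W$ are between non-leaves, so $h'$ takes values in $\{0, \frac{1}{m}, \ldots, 1\}$ and is a valid fractional $[1, \frac{n}{m}]$-factor with $h'(e^*) = h(e^*) - \frac{1}{m}$, giving the desired contradiction.
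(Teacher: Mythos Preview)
Your approach is essentially the paper's: pick a fractional $[1,\frac{n}{m}]$-factor $h$ minimizing a suitable edge value (subject to sum-minimality), use sum-minimality to get that $(+)$-vertices are independent and that non-pendant edges have value strictly between $0$ and $1$, then shift along a path from the leaf's $(+)$-neighbour into the odd circuit $C$. The paper's execution is a little cleaner at the endpoint: rather than your $(\sigma,j)$ case analysis, it simply observes that since $C$ is odd and the $(+)$-vertices are independent, $C$ must contain two \emph{adjacent} $(-)$-vertices $v_{l-1},v_l$; the shifting path is chosen to terminate at this pair, and whichever of the two needs to absorb the final $+\frac{1}{m}$ does so. This replaces your parity discussion by a one-line pigeonhole.

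One small gap to fix: in your $l\geq 3$ case you never state that $h$ is additionally chosen to minimize $h(e^*)$. When $l+j$ is even your shift preserves $\sum_{e'}h(e')$, so ``$h'(e^*)=h(e^*)-\frac{1}{m}$'' contradicts nothing under sum-minimality alone; you need the secondary minimization of $h(e^*)$ among sum-minimizers, exactly as you imposed in the $l=2$ case. With that added, the argument goes through.
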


\emph{Proof of Claim \ref{claim:No_circuit_and_deg1_vertex}.}
Suppose $F$ contains a component with a circuit $C$ and a vertex $x$ with $N_{F}(x)=\{y\}$. Let $z \in N_{F}(y) \setminus \{x\}$ be a vertex such that either the edge $yz$ lies on a path from $y$ to $C$ or $y,z \in V(C)$. Now, fix a fractional $[1,\frac{n}{m}]$-factor $h$ of $F$ with values in $\{0,\frac{1}{m},...,\frac{m-1}{m},1\}$, such that
\begin{itemize}
\item[$(i)$] $h(yz)$ is as small as possible,
\item[$(ii)$] with respect to $(i)$, $\sum_{e \in E(F)}h(e)$ is as small as possible.
\end{itemize}
Again, no two $(+)$-vertices are adjacent in $F$, which implies that $C$ contains adjacent $(-)$-vertices. Furthermore, an edge received the value 1 if and only if it is incident with a vertex of degree 1, in particular $h(xy)=1$ and hence $y$ is a $(+)$-vertex. In conclusion, there is a path $P=(v_{1},...,v_{l})$ such that $v_{1}=y$, $v_{2}=z$ and $v_{l-1},v_{l}$ are two $(-)$-vertices of $C$. Now, add $\frac{1}{m}$ and $-\frac{1}{m}$ alternately to the edges of $P-v_{l-1}v_{l}$ such that $-\frac{1}{m}$ is added to $yz$. If $v_{l-2}v_{l-1}$  received $-\frac{1}{m}$, add $\frac{1}{m}$ to $v_{l-1}v_{l}$ (see Figure \ref{fig:3_isolated_vertex_conditions}).

\begin{figure}[htbp]
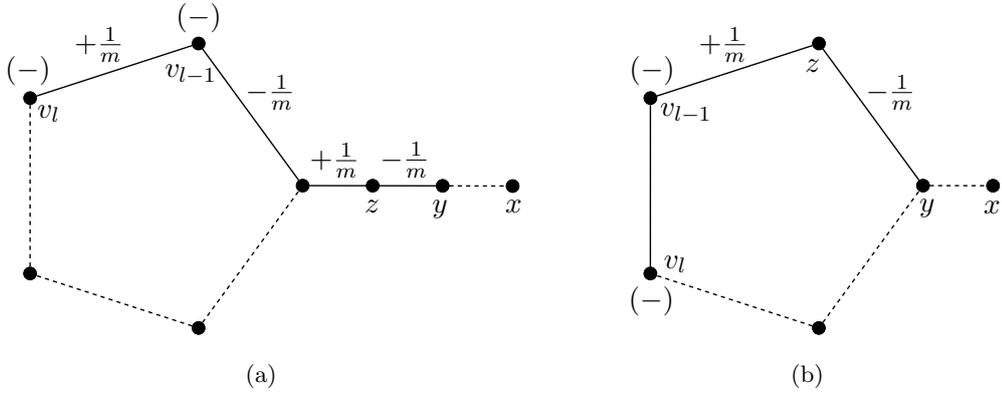

	\centering
	\subfigure[]{
		\begin{minipage}[t]{0.55\textwidth}
			\centering
			\input{circuit_leaf_1.pdf_tex}
		\end{minipage}
	}
	\subfigure[]{
		\begin{minipage}[t]{0.3\textwidth}
			\centering
			\input{circuit_leaf_2.pdf_tex}
		\end{minipage}
	}
\caption{The modifying of $h$ if $F$ contains a component with a circuit and a vertex of degree 1 in the cases (a) $y \notin V(C)$ and (b) $y \in V(C)$. The solid edges are the edges of $P$.}
\label{fig:3_isolated_vertex_conditions}
\end{figure}

The resulting function is denoted by $h'$. For each edge $e \in E(P)$ we have $h(e)>0$ by Claim~\ref{claim:frac_factor_no_0} and $h(e)<1$ since $P$ does not contain a vertex of degree 1 in $F$. In conclusion, $h'$ has values in $\{0,\frac{1}{m},...,\frac{m-1}{m},1\}$. Furthermore, we have $d^{h'}(y) = d^{h}(y)-\frac{1}{m}$, $d^{h'}(v) \in \{d^{h}(v), d^{h}(v) + \frac{1}{m}\}$ for every $v \in \{v_{l-1},v_l\}$ and $d^{h'}(w)=d^{h}(w)$ for every other vertex $w$. Since $y$ is a $(+)$-vertex and $v_{l-1}$, $v_{l}$ are $(-)$-vertices (with respect to $h$), $h'$ is a fractional $[1,\frac{n}{m}]$-factor of $F$ with values in $\{0,\frac{1}{m},...,\frac{m-1}{m},1\}$. This contradicts the choice of $h$, since $h'(yz)=h(yz)-\frac{1}{m}$.
\ENDproof

By Claims~\ref{claim:No_even_circuit}-\ref{claim:No_circuit_and_deg1_vertex}, each component of $F$ is isomorphic to either an odd circuit or a tree.

\begin{claim} \label{claim:odd_circuits_small}
If $i$ is a positive integer and $C$ is a component of $F$ isomorphic to $C_{2i+1}$, then $i<\frac{m}{n-m}$.
\end{claim}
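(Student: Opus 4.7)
The plan is to argue by contradiction using the inclusion-minimality of $F$. Assume $i \geq \frac{m}{n-m}$, equivalently $\frac{n}{m} \geq \frac{i+1}{i}$. I will show that for any edge $e \in E(C)$ the proper spanning subgraph $F - e$ of $F$ still satisfies the $\frac{n}{m}$-isolated-vertex-condition, which contradicts the choice of $F$ as an inclusion-wise minimal such factor.

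By Observation~\ref{obs:isolated_vertex_condition_unconnected_graph}, it suffices to verify the $\frac{n}{m}$-isolated-vertex-condition on each component of $F - e$ separately. Deleting $e$ turns $C$ into a path $P_{2i+1}$ while leaving every other component of $F$ unchanged; those other components satisfy the condition because $F$ does. So the task reduces to showing that $P_{2i+1}$ satisfies the $\frac{n}{m}$-isolated-vertex-condition.

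The central step is to prove that $iso(P_{2i+1} - S) \leq \frac{i+1}{i}|S|$ for every $S \subset V(P_{2i+1})$. Let $I$ denote the set of isolated vertices of $P_{2i+1} - S$ and let $k \in \{0,1,2\}$ be the number of endpoints of $P_{2i+1}$ contained in $I$. Every vertex in $I$ has all of its neighbours in $P_{2i+1}$ contained in $S$, and double-counting the edges between $I$ and $S$ gives
\[
2|I| - k \;=\; \sum_{v \in I} d_{P_{2i+1}}(v) \;\leq\; \sum_{u \in S} d_{P_{2i+1}}(u) \;\leq\; 2|S|,
\]
so $|I| \leq |S| + 1$. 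The equality case $|I| = |S| + 1$ forces $k = 2$ and every $u \in S$ to have both of its neighbours in $I$; propagating this alternation from the endpoint $v_1 \in I$ along the path then shows that necessarily $S = \{v_2, v_4, \ldots, v_{2i}\}$ and $I = \{v_1, v_3, \ldots, v_{2i+1}\}$, so in particular $|S| = i$ and $|I| = \frac{i+1}{i}|S|$. In every other case $|I| \leq |S| \leq \frac{i+1}{i}|S|$, and the desired inequality is established.

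Combining this inequality with $\frac{n}{m} \geq \frac{i+1}{i}$ yields $iso(P_{2i+1} - S) \leq \frac{n}{m}|S|$, so $P_{2i+1}$ satisfies the $\frac{n}{m}$-isolated-vertex-condition, contradicting the minimality of $F$. The main obstacle is the rigidity of the equality case in the double-counting bound; once the alternating structure on $P_{2i+1}$ is pinned down, the rest of the proof is a short application of Observation~\ref{obs:isolated_vertex_condition_unconnected_graph}.
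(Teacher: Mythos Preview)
Your proof is correct and follows essentially the same approach as the paper: use the minimality of $F$ together with Observation~\ref{obs:isolated_vertex_condition_unconnected_graph} to reduce the question to the path $P_{2i+1}$, and then compare $\frac{n}{m}$ with the extremal ratio $\frac{i+1}{i}$. The paper's own proof simply asserts that $P_{2i+1}$ failing the $\frac{n}{m}$-isolated-vertex-condition forces $\frac{i+1}{i}>\frac{n}{m}$, whereas you supply the double-counting argument and the analysis of the equality case that make this step explicit.
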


\emph{Proof of Claim \ref{claim:odd_circuits_small}.}
By the choice of $F$ and Observation~\ref{obs:isolated_vertex_condition_unconnected_graph}, no proper subgraph of $C$ satisfies the $\frac{n}{m}$-isolated-vertex-condition. In particular, $P_{2i+1}$ does not satisfy the $\frac{n}{m}$-isolated-vertex-condition. Therefore, $\frac{i+1}{i}> \frac{n}{m}$, which is equivalent to $i<\frac{m}{n-m}$.
\ENDproof

\begin{claim} \label{claim:trees_in_Tnm}
If $T$ is a component of $F$ that is isomorphic to a tree, then $T \in \mathcal{T}_{\frac{n}{m}}$.
\end{claim}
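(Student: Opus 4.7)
The plan is to verify that $T$ meets the two defining conditions of $\mathcal{T}_{\frac{n}{m}}$ directly, using the inclusion-wise minimality of $F$ together with Observation~\ref{obs:isolated_vertex_condition_unconnected_graph}. For the first condition, $F$ itself satisfies the $\frac{n}{m}$-isolated-vertex-condition by construction, and since $T$ is one of its components, Observation~\ref{obs:isolated_vertex_condition_unconnected_graph} immediately transfers the condition to $T$.

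For the second condition, fix any edge $e \in E(T)$; I must produce a set $S^{*} \subset V(T)$ with $iso((T-e)-S^{*}) > \frac{n}{m}|S^{*}|$. By the minimality of $F$, the graph $F-e$ does not satisfy the $\frac{n}{m}$-isolated-vertex-condition. Since $T$ is a tree, $T-e$ splits into exactly two subtrees $T_{1}$ and $T_{2}$, so the components of $F-e$ are precisely $T_{1}$, $T_{2}$, and all components of $F$ different from $T$. The latter are unchanged from $F$ and hence still satisfy the $\frac{n}{m}$-isolated-vertex-condition. By the contrapositive direction of Observation~\ref{obs:isolated_vertex_condition_unconnected_graph}, at least one of $T_{1}, T_{2}$ must violate it; say $T_{1}$ does, witnessed by some $S' \subset V(T_{1})$ with $iso(T_{1}-S') > \frac{n}{m}|S'|$. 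Taking $S^{*} := S'$, we have $S^{*} \subset V(T)$, and $(T-e)-S^{*}$ is the disjoint union $(T_{1} - S') \cup T_{2}$, so
\begin{align*}
iso((T-e)-S^{*}) \;\geq\; iso(T_{1} - S') \;>\; \frac{n}{m}|S'| \;=\; \frac{n}{m}|S^{*}|,
\end{align*}
which is the desired witness.

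No serious obstacle is expected here; the argument is essentially bookkeeping built on the minimality of $F$ and Observation~\ref{obs:isolated_vertex_condition_unconnected_graph}. The only care point worth flagging is that $T$ must have at least one edge, so that the second condition is non-vacuous. This is automatic: a single-vertex component would fail $iso(T-S)\leq\frac{n}{m}|S|$ at $S=\emptyset$, contradicting the first condition just established.
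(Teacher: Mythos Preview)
Your proof is correct and follows essentially the same approach as the paper's, just spelled out in more detail: the paper compresses the argument to two sentences, invoking Observation~\ref{obs:isolated_vertex_condition_unconnected_graph} to say that $T$ satisfies the $\frac{n}{m}$-isolated-vertex-condition while (by minimality of $F$) no proper spanning subgraph of $T$ does, which is exactly the definition of $\mathcal{T}_{\frac{n}{m}}$. Your explicit passage through the components $T_1,T_2$ of $T-e$ and the construction of $S^*$ from a witness in one of them is a faithful unpacking of that one-line appeal to minimality.
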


\emph{Proof of Claim \ref{claim:trees_in_Tnm}.}
By Observation~\ref{obs:isolated_vertex_condition_unconnected_graph}, $T$ satisfies the $\frac{n}{m}$-isolated-vertex-condition, whereas no proper subgraph of $T$ satisfies this condition. Hence, $T \in \mathcal{T}_{\frac{n}{m}}$.
\ENDproof

In conclusion, every component of $F$ is isomorphic to an element of $\{C_{2i+1},T \colon 1 \leq i < \frac{m}{n-m}, T\in\mathcal{T}_{\frac{n}{m}}\}$ and thus, $F$ is the desired factor. This completes the proof of Theorem~\ref{main result_isolated_vetex_conditions}.
\end{proof}

\section{Structural properties of the trees in $\mathcal{T}_{\frac{n}{m}}$} \label{sec:Tnm}

In this section, we characterize the trees in $\mathcal{T}_{\frac{n}{m}}$ in terms of their bipartition.

\begin{theo}\label{theo:characterisation_Tnm_V2}
Let $n,m$ be integers with $0<m<n$ and let $T$ be a tree with bipartition $\{A,B\}$, where $0<|B|\leq |A|$. Then, the following statements are equivalent:

\begin{itemize}
\item[$1)$] $T \in \mathcal{T}_{\frac{n}{m}}$.

\item[$2)$] For every $x \in B$, $T$ has a fractional $[1,\frac{n}{m}]$-factor $h$ with values in $\{\frac{1}{m},...,\frac{m-1}{m},1\}$ such that $d^h(a)=1$ for every $a \in A$, $d^h(b)=\frac{n}{m}$ for every $b \in B\setminus\{x\}$ and $d^h(x)=\frac{n}{m}+|A|-\frac{n}{m}|B|$.

\item[$3)$] $|A| \leq \frac{n}{m} |B|$ and for every $e=xy \in E(T)$: $|V(T_e) \cap A|>\frac{n}{m} |V(T_e)\cap B|$, where $T_e$ is the component of $T-e$ that contains the unique vertex in $\{x,y\} \cap A$.

\end{itemize}
\end{theo}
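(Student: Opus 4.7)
My plan is to establish the three-way equivalence by proving the cycle $2) \Rightarrow 3) \Rightarrow 2)$ and then closing with $2) \Leftrightarrow 1)$. The central bookkeeping tool is a bipartite degree identity: for any fractional $[1,\tfrac{n}{m}]$-factor $h$ of $T$ and any edge $e=xy \in E(T)$ with $x \in A$, $y \in B$, the sum
\[
\sum_{a \in A \cap V(T_e^x)} d^h(a) \;-\; \sum_{b \in B \cap V(T_e^x)} d^h(b) \;=\; h(e),
\]
because every edge inside $V(T_e^x)$ contributes once to each sum and cancels, while the cut-edge $e$ contributes only on the $A$-side.

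For $2) \Rightarrow 3)$, fix $e = xy$ as above and apply statement~$2)$ with a distinguished vertex $x^* \in B \cap V(T_e^y)$ (non-empty since it contains $y$). Then $d^h(a) = 1$ for $a \in A$ and $d^h(b) = \tfrac{n}{m}$ for every $b \in B \cap V(T_e^x) \subseteq B \setminus \{x^*\}$, so the identity yields $h(e) = |A \cap V(T_e^x)| - \tfrac{n}{m}|B \cap V(T_e^x)|$; strict positivity $h(e) \geq \tfrac{1}{m}$ gives the edge inequality in~$3)$, and the bound $|A| \leq \tfrac{n}{m}|B|$ is $d^h(x^*) \leq \tfrac{n}{m}$.

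For $3) \Rightarrow 2)$, root $T$ at $x$ and define on every non-root vertex $v$ with parent-edge $e_v$ and subtree $T_v$
\[
h(e_v) \;=\; \begin{cases} |A \cap V(T_v)| - \tfrac{n}{m}|B \cap V(T_v)|, & v \in A,\\ \tfrac{n}{m}|B \cap V(T_v)| - |A \cap V(T_v)|, & v \in B \setminus \{x\}. \end{cases}
\]
The degree identity (applied to each $T_v$) confirms that $h$ realises the degrees prescribed in~$2)$. Every $h(e_v)$ is a multiple of $\tfrac{1}{m}$, and positivity of $h(e_v)$ is~$3)$'s edge inequality applied to $e_v$. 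For $v \in A$, the bound $h(e_v) \leq 1$ follows from the telescoping $|A \cap V(T_v)| - \tfrac{n}{m}|B \cap V(T_v)| = 1 + \sum_{c}(|A \cap V(T_c)| - \tfrac{n}{m}|B \cap V(T_c)|)$, the sum being over children $c \in B$ of $v$ and each summand being negative by~$3)$ applied to the edge $vc$.

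The main obstacle is verifying $h(e_v) \leq 1$ for non-root $v \in B$ and $d^h(x) \geq 1$; both rely on extra structural consequences of~$3)$. The sharpest is that~$3)$ together with $|B| \geq 2$ forbids $B$-leaves in $T$ (otherwise rooting at a different $B$-vertex would force $h$ on the edge to such a leaf to equal $\tfrac{n}{m} > 1$); propagating this no-$B$-leaves observation along the tree together with the relation $\sum_{c} h(e_c) = \tfrac{n}{m} - h(e_v)$ at each non-root $v \in B$ and the integrality $m \cdot h(e_v) \in \mathbb{Z}$ should deliver the remaining bounds. Once $3) \Rightarrow 2)$ is complete, $2) \Rightarrow 1)$ combines Theorem~\ref{fractfactors2} (for the isolated vertex condition on $T$) with the test $S = B \cap V(T_e^x) \subset V(T-e)$, which makes every vertex of $A \cap V(T_e^x)$ isolated in $(T-e) - S$ and yields $iso((T-e)-S) \geq |A \cap V(T_e^x)| > \tfrac{n}{m}|S|$; and $1) \Rightarrow 2)$ is obtained by starting from any fractional factor (whose existence is Theorem~\ref{fractfactors2}), which by minimality of $T \in \mathcal{T}_{\frac{n}{m}}$ must be strictly positive on every edge, and then redistributing flow along tree paths until the degree profile of~$2)$ is attained.
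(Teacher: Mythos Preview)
Your plan differs from the paper's in two places. First, the paper proves the cycle $1)\Rightarrow 2)\Rightarrow 3)\Rightarrow 1)$, whereas you attempt $3)\Rightarrow 2)$ directly. Second, and more importantly, for the implication out of $3)$ the paper defines
\[
h(e)\;=\;|V(T_e)\cap A|\;-\;\tfrac{|A|}{|B|}\,|V(T_e)\cap B|,
\]
using the \emph{actual ratio} $\tfrac{|A|}{|B|}$ rather than $\tfrac{n}{m}$. With this choice one computes $d^h(b)=\tfrac{|A|}{|B|}$ for every $b\in B$, which lies in $[1,\tfrac{n}{m}]$ automatically from $|B|\leq |A|\leq\tfrac{n}{m}|B|$; the range check is then a one-liner and one lands in statement~$1)$. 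Your formula with $\tfrac{n}{m}$ aims straight for the extremal degree profile of~$2)$, and the price is the additional verification that $d^h(x)\geq 1$.

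On that verification: the bound $h(e_v)\leq 1$ for non-root $v\in B$ is \emph{not} an obstacle---the parent $p\in A$ of $v$ satisfies $d^h(p)=1$ by your own degree computation and $h(e_p)\geq\tfrac{1}{m}$ by~$3)$, so $h(e_v)\leq 1-h(e_p)<1$ immediately (and this already yields the no-$B$-leaf fact as the special case where $T_v=\{v\}$ would force $h(e_v)=\tfrac{n}{m}$). The genuine gap is $d^h(x)\geq 1$, and ``propagating the no-$B$-leaves observation together with integrality should deliver the remaining bounds'' is not a proof. Here is a clean way to close it: when $|B|\geq 2$, pick a non-root $b\in B$ farthest from $x$; then $T_b$ consists of $b$ together with $k\geq 1$ leaves in $A$, so $h(e_b)=\tfrac{n}{m}-k\leq\tfrac{n}{m}-1$. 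Condition~$3)$ applied to the edge $e_b$ (whose $A$-side is $T\setminus T_b$) gives the strict inequality $h(e_b)>\tfrac{n}{m}|B|-|A|$, and combining yields $|A|>\tfrac{n}{m}(|B|-1)+1$, i.e.\ $d^h(x)>1$. So your route can be completed, but as written the key step is missing; the paper's choice of $\tfrac{|A|}{|B|}$ sidesteps the issue entirely.
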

\begin{proof}
$1) \Rightarrow 2)$. For stars $2)$ trivially holds. Thus, we assume $T$ is not a star and hence, there is an $u \in Leaf(T-Leaf(T))$. Recall that no fractional $[1,\frac{n}{m}]$-factor of $T$ uses value $0$. Let $h$ be a fractional $[1,\frac{n}{m}]$-factor of $T$ with values in $\{\frac{1}{m},...,\frac{m-1}{m},1\}$, such that
\begin{itemize}
\item[$(i)$] $d^h(u)$ is as small as possible,
\item[$(ii)$] with respect to $(i)$, $\sum_{e \in E(T)}h(e)$ is as small as possible.
\end{itemize}
Observe that no two $(+)$-vertices are adjacent and as a consequence, $h(e)=1$ if and only if $e$ is a pendant edge of $T$. Furthermore, every vertex adjacent to a leaf of $T$ is a $(+)$-vertex since $T$ is not isomorphic to $K_2$.

First, suppose $T$ contains a path $P=(u,v_1, \ldots, v_l)$ in $T$ such that $v_{l-1}$ is a $(-)$-vertex and $d^h(v_l)<\frac{n}{m}$. Modify $h$ as follows: add $-\frac{1}{m}$ and $\frac{1}{m}$ alternately to the edges of $P-v_{l-1}v_{l}$ such that $-\frac{1}{m}$ is added to $uv_1$. If $v_{l-2}v_{l-1}$ received $-\frac{1}{m}$, add $\frac{1}{m}$ to $v_{l-1}v_{l}$, see Figure~\ref{fig:4_isolated_vertex_conditionsV2}.

\begin{figure}[htbp]
	\centering
	\scalebox{1}{\input{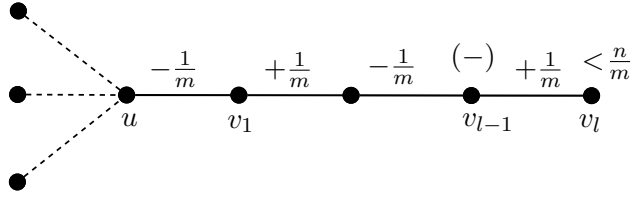}} 
	\caption{The modifying of $h$ if $T$ contains a path $P=(u,v_1, \ldots, v_l)$ such that $v_{l-1}$ is a $(-)$-vertex and $d^h(v_l)<\frac{n}{m}$. The solid edges belong to $P$.}
	\label{fig:4_isolated_vertex_conditionsV2}
\end{figure}

Note that $v_l$ is not a leaf, since it is adjacent to a $(-)$-vertex. Hence, no edge of $P$ is a pendant edge of $T$ and thus, every $e \in E(P)$ satisfies $h(e)<1$. In conclusion, the modification of $h$, denoted by $h'$, has values in $\{0,\frac{1}{m},...,\frac{m-1}{m},1\}$. Moreover, $h'$ is a fractional $[1,\frac{n}{m}]$-factor of $T$ since $d^h(u)>1$, $d^h(v_{l-1})=1$ and $d^h(v_l)<\frac{n}{m}$. This contradicts the choice of $h$, since $d^{h'}(u)=d^h(u)-\frac{1}{m}$.

The non-existence of such a path implies that the set of $(-)$-vertices is stable and every $v \in V(T)\setminus \{u\}$ that is a $(+)$-vertex satisfies $d^h(v)= \frac{n}{m}$. The former implies that $A$ consists of all $(-)$-vertices and $B$ of all $(+)$-vertices. Hence,
\begin{align*}
|A|=\sum\limits_{a \in A} d^h(a)=\sum\limits_{b \in B}d^h(b)=\frac{n}{m}(|B|-1)+d^h(u),
\end{align*}
which implies $d^h(u)=\frac{n}{m}+|A|-\frac{n}{m}|B|$.

Now, let $x$ be an arbitrary $(+)$-vertex, let $P$ be the $ux$-path contained in $T$ and let $l=m\left(\frac{n}{m}-d^h(u)\right)$. Note that $|V(P)|$ is odd, since $P$ consists of alternately $(+)$- and $(-)$-vertices. Set $h_0=h$ and for $i \in \{1,\ldots,l\}$ let $h_i$ be the function obtained from $h_{i-1}$ by alternately adding $\frac{1}{m}$ and $-\frac{1}{m}$ to the edges of $P$ such that $\frac{1}{m}$ is added to the edge of $P$ incident with $u$ (see Figure~\ref{fig:5_isolated_vertex_conditions}).

\begin{figure}[htbp]
	\centering
	\scalebox{1}{
\begingroup%
  \makeatletter%
  \providecommand\color[2][]{%
    \errmessage{(Inkscape) Color is used for the text in Inkscape, but the package 'color.sty' is not loaded}%
    \renewcommand\color[2][]{}%
  }%
  \providecommand\transparent[1]{%
    \errmessage{(Inkscape) Transparency is used (non-zero) for the text in Inkscape, but the package 'transparent.sty' is not loaded}%
    \renewcommand\transparent[1]{}%
  }%
  \providecommand\rotatebox[2]{#2}%
  \newcommand*\fsize{\dimexpr\f@size pt\relax}%
  \newcommand*\lineheight[1]{\fontsize{\fsize}{#1\fsize}\selectfont}%
  \ifx\svgwidth\undefined%
    \setlength{\unitlength}{222.23394458bp}%
    \ifx\svgscale\undefined%
      \relax%
    \else%
      \setlength{\unitlength}{\unitlength * \real{\svgscale}}%
    \fi%
  \else%
    \setlength{\unitlength}{\svgwidth}%
  \fi%
  \global\let\svgwidth\undefined%
  \global\let\svgscale\undefined%
  \makeatother%
  \begin{picture}(1,0.31778778)%
    \lineheight{1}%
    \setlength\tabcolsep{0pt}%
    \put(0,0){\includegraphics[width=\unitlength,page=1]{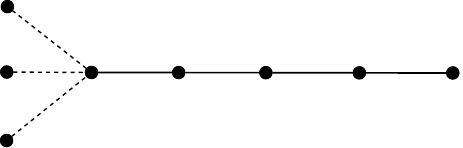}}%
    \put(0.24207246,0.19240997){\makebox(0,0)[lt]{\lineheight{1.25}\smash{\begin{tabular}[t]{l}$+\frac{1}{m}$\end{tabular}}}}%
    \put(0.42763453,0.19106304){\makebox(0,0)[lt]{\lineheight{1.25}\smash{\begin{tabular}[t]{l}$-\frac{1}{m}$\end{tabular}}}}%
    \put(0.82819223,0.19141217){\makebox(0,0)[lt]{\lineheight{1.25}\smash{\begin{tabular}[t]{l}$-\frac{1}{m}$\end{tabular}}}}%
    \put(0.62210883,0.19646565){\makebox(0,0)[lt]{\lineheight{1.25}\smash{\begin{tabular}[t]{l}$+\frac{1}{m}$\end{tabular}}}}%
    \put(0.95671791,0.10732478){\makebox(0,0)[lt]{\lineheight{1.25}\smash{\begin{tabular}[t]{l}$x$\end{tabular}}}}%
    \put(0.1885875,0.10945832){\makebox(0,0)[lt]{\lineheight{1.25}\smash{\begin{tabular}[t]{l}$u$\end{tabular}}}}%
  \end{picture}%
\endgroup%
} 
	\caption{The modifying of $h_{i-1}$ to obtain $h_i$. The solid edges belong to $P$.}
	\label{fig:5_isolated_vertex_conditions}
\end{figure}

We have $d^{h_l}(u)=d^h(u)+\frac{l}{m}=\frac{n}{m}$ and $d^{h_l}(x)=d^h(x)-\frac{l}{m}=\frac{n}{m}-\frac{l}{m}=d^h(u)$. As a consequence, $d^{h_i}(v) \in [1,\frac{n}{m}]$ for every $i \in \{1,\ldots,l\}$ and every $v \in V(T)$. Furthermore, for every $i \in \{1,\ldots,l\}$, if $h_{i-1}$ is a fractional $[1,\frac{n}{m}]$-factor that does not use value $0$ nor $1$ on $P$, then $h_i$ is a fractional $[1,\frac{n}{m}]$-factor. Thus, $h_i$ also does not use value $0$ on $P$. Moreover, it also does not use value $1$ on $P$, since every edge of $P$ is incident with a $(-)$-vertex (with respect to $h_i$) that is not a leaf of $T$. As a consequence, for every $i \in \{1, \ldots,l\}$, $h_i$ only uses values $\frac{1}{m},\ldots,\frac{m-1}{m}$ on $P$ and therefore, $h_l$ is the desired fractional factor.

$2) \Rightarrow 3)$ By Theorem~\ref{fractfactors2}, $T$ satisfies the $\frac{n}{m}$-isolated-vertex-condition and hence $|A|=iso(T-B) \leq \frac{n}{m}|B|$. Let $e=xy \in E(T)$, where $y \in A$, and let $h$ be a fractional $[1,\frac{n}{m}]$-factor of $T$ with the properties stated in $2)$ (with predescribed vertex $x$). Then,
\begin{align*}
|V(T_e) \cap A|=\sum\limits_{v \in V(T_e) \cap A}d^h(v)=h(xy)+\sum\limits_{w \in V(T_e) \cap B}d^h(w)=h(xy)+\frac{n}{m} |V(T_e)\cap B|,
\end{align*}
which proves $3)$, since $h(xy)>0$.

$3) \Rightarrow 1)$ For every $e \in E(T)$, statement $3)$ implies
\begin{align*}
iso\left(\left(T-e\right)-\left(V(T_e) \cap B\right)\right)=|V(T_e) \cap A|>\frac{n}{m} |V(T_e)\cap B|.
\end{align*}
Thus, by Theorem~\ref{fractfactors2} it suffices to show that $T$ has a fractional $[1,\frac{n}{m}]$-factor. For every $e \in E(T)$ set
\begin{align*}
h(e)=|V(T_e) \cap A|-\frac{|A|}{|B|}|V(T_e) \cap B|.
\end{align*}
For every $e \in E(T)$, statement $3)$ implies
\begin{align*}
h(e)=|V(T_e) \cap A|-\frac{|A|}{|B|}|V(T_e) \cap B| \geq |V(T_e) \cap A|-\frac{n}{m}|V(T_e) \cap B| >0.
\end{align*}
By the definition of $h$, for every $a \in A$ and every $b \in B$ we have
\begin{align*}
d^h(a) &= \sum\limits_{e' \in \partial_T(a)}h(e')=(d_T(a)-1)(|A|-1) + d_T(a)- \frac{|A|}{|B|}(d_T(a)-1)|B| \\&=  d_T(a)|A|-|A|+1 - |A|(d_T(a)-1) =1
\end{align*}
and
\begin{align*}
d^h(b) &= \sum\limits_{e' \in \partial_T(b)}h(e')=|A| - \frac{|A|}{|B|}\left(|B|-1\right) = \frac{|A|}{|B|}.
\end{align*}
Note that $1< \frac{|A|}{|B|}\leq \frac{n}{m}$, since $|A|>|B|$. Furthermore, for every $e=xy \in E(T)$, where $x \in B$, the above calculations imply
\begin{align*}
h(e)=d^h(y)-\sum\limits_{e' \in \partial_T(y)\setminus\{e\}}h(e')\leq 1.
\end{align*}
In conclusion, $h$ is a fractional $[1,\frac{n}{m}]$-factor of $T$, which proves $T \in \mathcal{T}_{\frac{n}{m}}$.
\end{proof}

Note that, by the proof of $3) \Rightarrow 1)$, every $T \in \mathcal{T}_{\frac{n}{m}}$ has a fractional $[1,\frac{n}{m}]$-factor $h$ such that $d^h(a)=1$ for every $a \in A$ and $d^h(b)=\frac{|A|}{|B|}$ for every $b \in B$. On the other hand, not every tree with such a factor belongs to $\mathcal{T}_{\frac{n}{m}}$. As the following corollary shows, Theorem~\ref{theo:characterisation_Tnm_V2} implies some structural properties of trees in $\mathcal{T}_{\frac{n}{m}}$.

\begin{cor}
Let $n,m$ be integers with $0<m<n$ and let $T \in \mathcal{T}_{\frac{n}{m}}$ be a tree with bipartition $\{A,B\}$, where $0<|B|\leq |A|$. Then, the following holds
\begin{itemize}
\item[$(i)$] either $T \cong K_{1,1}$, or $Leaf(T) \subseteq A$,
\item[$(ii)$] $d_T(a) \leq m$ for every $a \in A$,
\item[$(iii)$] $d_T(b) \leq n$ for every $b \in B$,
\item[$(iv)$] $d_T(x) = \lfloor \frac{n}{m} \rfloor+1$ for every $x \in Leaf(T-Leaf(T))$,
\item[$(v)$] if $n \equiv 1 \text{ (mod m)}$, then either $T$ is a star or $|A| =\frac{n}{m}|B|$ and $|V(T)|$ is a multiple of $n+m$.
\end{itemize}
\end{cor}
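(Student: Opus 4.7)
The plan is to apply Theorem~\ref{theo:characterisation_Tnm_V2}(2), which provides, for every $x \in B$, an explicit fractional $[1,\tfrac{n}{m}]$-factor $h$ of $T$ with values in $\{\tfrac{1}{m},\dots,\tfrac{m-1}{m},1\}$ satisfying $d^h(a)=1$ for $a \in A$, $d^h(b)=\tfrac{n}{m}$ for $b \in B\setminus\{x\}$, and $d^h(x)=\tfrac{n}{m}+|A|-\tfrac{n}{m}|B|$. Throughout, the argument will exploit just two consequences: every edge receives positive $h$-value of at least $\tfrac{1}{m}$, and every edge receives $h$-value at most $1$. A repeated trick is that every leaf $a$ of $T$ (which by (i) lies in $A$) has $d^h(a)=1$, so its unique pendant edge is forced to have $h$-value exactly $1$.

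For (i), assume $T \not\cong K_{1,1}$ and, for contradiction, suppose a leaf $b$ lies in $B$. Applying 2) with $x:=b$, the unique edge $ab$ at $b$ satisfies $h(ab)=d^h(b)$, and the constraints $1\leq d^h(b)$ and $h(ab)\leq 1$ jointly force $h(ab)=1$. Then $d^h(a)=1$ together with positivity of $h$ on every other edge at $a$ forces $a$ to be a leaf too, so $T=K_{1,1}$, a contradiction. Parts (ii) and (iii) are then almost immediate from $d^h(v)\geq \tfrac{d_T(v)}{m}$: picking any fixed $h$ from 2), we get $d_T(a)\leq m$ from $d^h(a)=1$, and $d_T(b)\leq n$ from $d^h(b)\leq \tfrac{n}{m}$.

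For (iv), let $x \in \mathrm{Leaf}(T-\mathrm{Leaf}(T))$; then $T$ is not a star, so by (i) every leaf of $T$ lies in $A$, whence $x\in B$ and $|B|\geq 2$. Let $y$ be the unique non-leaf neighbor of $x$ and apply 2) with predescribed vertex $x'\in B\setminus\{x\}$, so $d^h(x)=\tfrac{n}{m}$. Since the $d_T(x)-1$ pendant edges at $x$ are forced to have value $1$, we have $h(xy)=\tfrac{n}{m}-d_T(x)+1$, and $\tfrac{1}{m}\leq h(xy)\leq 1$ gives $\lceil\tfrac{n}{m}\rceil\leq d_T(x)\leq \lfloor\tfrac{n+m-1}{m}\rfloor$. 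In the non-integer case these bounds coincide with $\lfloor\tfrac{n}{m}\rfloor+1$. The main subtlety is the integer case, where the only admissible value is $d_T(x)=\tfrac{n}{m}$ with $h(xy)=1$; here I would argue that $d^h(y)=1$ together with positivity of $h$ on $\partial_T(y)\setminus\{xy\}$ forces $y$ to be a leaf, contradicting the choice of $x$, so no such $x$ exists and (iv) holds vacuously.

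For (v), assume $T$ is not a star, so some $x\in\mathrm{Leaf}(T-\mathrm{Leaf}(T))$ exists, and by (iv), $d_T(x)=\lfloor\tfrac{n}{m}\rfloor+1$. Now apply 2) with predescribed vertex $x$; using $n\equiv 1 \pmod m$ to write $\tfrac{n}{m}-\lfloor\tfrac{n}{m}\rfloor=\tfrac{1}{m}$, one computes $h(xy)=\tfrac{1}{m}+|A|-\tfrac{n}{m}|B|$. The inequality $h(xy)\geq\tfrac{1}{m}$ upgrades the bound $|A|\leq\tfrac{n}{m}|B|$ from Theorem~\ref{theo:characterisation_Tnm_V2}(3) to $m|A|=n|B|$. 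Since $\gcd(n,m)=1$ (from $n\equiv 1\pmod m$), this forces $m\mid|B|$; writing $|B|=ms$ gives $|V(T)|=|A|+|B|=(n+m)s$, a multiple of $n+m$.
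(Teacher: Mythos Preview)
Your proof is correct and follows essentially the same route as the paper: both arguments invoke Theorem~\ref{theo:characterisation_Tnm_V2}(2) and read off the degree constraints from the fact that every edge has $h$-value in $[\tfrac{1}{m},1]$, with pendant edges forced to value~$1$. The only cosmetic differences are that the paper picks two fixed vertices $x_1,x_2\in \mathrm{Leaf}(T-\mathrm{Leaf}(T))$ up front and uses the factors $h_1,h_2$ associated to them, whereas you vary the choice of special vertex $x$ part by part; and in (v) you use $\gcd(n,m)=1$ where the paper writes $|A|=\tfrac{n-1}{m}|B|+\tfrac{|B|}{m}$ directly. Your handling of the case $m\mid n$ in (iv)---showing that $h(xy)=1$ would force $y$ to be a leaf, so $\mathrm{Leaf}(T-\mathrm{Leaf}(T))=\emptyset$---is in fact more explicit than the paper's, which glosses over this point.
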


\begin{proof}
For stars the statements are trivial. Thus, assume $T$ is not a star and hence, there are two distinct vertices $x_1,x_2 \in Leaf(T-Leaf(T))$. Note that every vertex $v \in Leaf(T-Leaf(T))$ belongs to $B$, since $h(v)>1$ for every fractional $[1,\frac{n}{m}]$-factor $h$ of $T$. Consider two fractional $[1,\frac{n}{m}]$-factors $h_1, h_2$ of $T$ with the properties stated in statement $2)$ of Theorem~\ref{theo:characterisation_Tnm_V2} (with respect to $x_1$ and $x_2$, respectively). The existence of $h_1$ implies $(i), (ii), (iii)$ and $d(x) = \lfloor \frac{n}{m} \rfloor+1$ for every $x \in Leaf(T-Leaf(T))\setminus \{x_1\}$. By the existence of $h_2$ we have $d(x_1) = \lfloor \frac{n}{m} \rfloor+1$, which proves $(iv)$. Furthermore, if $n \equiv 1 \text{ (mod m)}$, then $\frac{n}{m} =  \lfloor \frac{n}{m} \rfloor+\frac{1}{m} \leq d^{h_1}(x_1) \leq \frac{n}{m}$. Hence, $\frac{n}{m}=d^{h_1}(x_1)=\frac{n}{m}+|A|-\frac{n}{m}|B|$, i.e. $|A| =\frac{n}{m}|B|$. Moreover, we observe that $\frac{|B|}{m}$ is an integer, since $\frac{n-1}{m}$ is an integer and $|A|=\frac{n}{m}|B|=\frac{n-1}{m}|B|+\frac{|B|}{m}$. As a consequence, $|V(G)|=|A|+|B|=\frac{n}{m}|B|+|B|=(n+m)\frac{|B|}{m}$, which proves $(v)$.
\end{proof}

By $(i), (iii)$ and $(iv)$, for every $T \in \mathcal{T}_{\frac{n}{1}}$, the set $Leaf(T-Leaf(T))$ is empty, which is equivalent to $T$ being a star. As a consequence, $\mathcal{T}_{\frac{n}{1}}=\{K_{1,i} \colon 1 \leq i \leq n\}$, or equivalently, Theorem~\ref{theo:star_factor} holds.

\bibliography{Lit_iso_toughness}{}
\addcontentsline{toc}{section}{References}
\bibliographystyle{abbrv}

\end{document}